\newtheorem{theorem}{Theorem}[section]
\newtheorem{remark}[theorem]{Remark}
\newtheorem{lemma}[theorem]{Lemma}
\author{Gaspard Ohlmann \\ University of Basel, \emph{gaspard.ohlmann@unibas.ch}}
\title{A Study on the Well-Posedness of 1D Energy-Critical Half-Wave Maps Equations}
\date{}
\begin{document}
	
	\maketitle
	


\begin{center} \textbf{Abstract} \end{center}

\emph{
    In this article, we study the well-posedness of the energy-critical half-wave maps equation (HWM) in dimension $1$. The half-wave maps equation emerges from the continuum limit of the Haldane Shastry spin chains and has been shown to arise as the continuum limit of Calogero-Moser classical spin systems. In higher dimension $d\geq 5$, it has been shown that (HWM) is well-posed in \cite{krieger2017small}. This result has been improved in \cite{kiesenhofer2021small} to $d = 4$ but the Strichartz estimate on which the argument is built no longer holds for smaller dimensions. A Lax-pair structure has been revealed for this equation in \cite{gerard2018lax}, indicating complete integrability and the fact that rational solutions stay rational for all time. The well-posedness of the (HWM) equation in lower dimensions remains an open problem. Here, we show the well-posedness of (HWM) in the rational case for finite times with separated poles, and for large and infinite times with distinct speeds of propagation.
}

\textbf{Keywords:} Half-Wave Maps, Calogero-Moser-Sutherland model, Haldane-Shastry model, Well-posedness

\tableofcontents

\newpage

\section*{Introduction}

\subsection*{Context}

This article is devoted to the study of the well-posedness of the half-wave maps equation (HWM) in spatial dimension $1$.

\begin{equation}
    m_t = m \times |\nabla | m,
\end{equation}

where $m: \mathbb{R}\times \mathbb{R} \to \mathbb{S}^2$ maps the real line into the sphere $\mathbb{S}^2\subseteq \mathbb{R}^3$, and $|\nabla|$ is the pseudo-linear operator defined using the Fourier Transform as
\begin{equation}
    \widehat{|\nabla| u} (\xi)  = |\xi| \hat u(\xi),
\end{equation}

or equivalently, using the Hilbert transform, as 

\begin{equation}
    |\nabla| m = \mathcal{H} (\partial_x m),~ \mathcal{H}(m)(x) = \frac{1}{\pi} \text{p.v.} \int_{-\infty}^{\infty} \frac{m(\xi)}{x-\xi}d\xi.
\end{equation}

The half-wave maps equation arises as a continuum limit of a classical version of the Haldane Shastry spin chain, as studied in \cite{zhou2015solitons}. Their numerical experiment suggested the existence of true multi-soliton solutions. In \cite{lenzmann2020derivation}, E. Lenzmann proved that the half-wave maps equation arises as an effective equation in the continuum limit of completely integrable Calogero-Moser classical spin systems with inverse square $1/r^2$. The half-wave maps equation also arises as a limit case of a spin generalization of the Benjamin-Ono equation (sBO), as shown in \cite{berntson2022spin}. We study here the well-posedness of the equation. More precisely, we investigate whether the Cauchy problem

\begin{equation}
\left\{
    \begin{aligned}
    &m_t(t,x) = m(t,x) \times |\nabla| m(t,x), \\
    &m(0,x) = f(x),
    \end{aligned}
    \right.
\end{equation}

is well-posed for $f \in \dot H^{1/2}(\mathbb{R})$. In particular, we show under the assumptions that the spectrum is non-degenerate, meaning that the asymptotic propagation speeds are different, that the problem is well-posed for large times and does not blow up at $+\infty$. We also show, under the assumption that the poles are separated, that a blow-up can not occur at finite time. At the time of writing, the well-posedness of the half-wave maps equation is still a completely open problem, as stated in \cite{krieger2017small}, \cite{kiesenhofer2021small} or \cite{lenzmann2018short}. For higher dimensions, J. Krieger and Y. Sire show in \cite{krieger2017small} that the half-wave maps equation is well-posed for $d\geq 5$ provided that the initial condition is sufficiently small in a suitable Besov space. This result has been later improved by A. Kiesenhofer and J. Krieger in \cite{kiesenhofer2021small}, which shows that the equation is well-posed in dimension $4$ provided that the initial condition is smooth and sufficiently small in a Besov space. 

In our situation, however, the method is unfortunately not applicable, as the crucial $L_t^2 L_x^\infty$ Strichartz estimate is no longer available. For the half-wave maps, the formally conserved quantity reads

\begin{equation}
E[\mathbf{u}]=\frac{1}{2} \int_{\mathbb{R}^d} \mathbf{u} \cdot|\nabla| \mathbf{u} d x=c_d \iint_{\mathbb{R}^d \times \mathbb{R}^d} \frac{|\mathbf{u}(x)-\mathbf{u}(y)|^2}{|x-y|^2} d x d y,
\end{equation}

which makes this problem \textbf{energy critical} in dimension $n=1$, implying that the solutions could in principle exhibit strange behavior, such as global existence for some initial data and blow-up for others. 

Finally, we would like to mention that weak solutions exist for large initial data in $\dot H^1 \cap \dot H^{1/2} (\mathbb{R})$, under the assumption that the function is smooth and constant outside a compact domain, as proven by Y. Liu in \cite{liu2023global}. This is achieved by considering a regularized equation with a dispersive term. 

In this article, we are interested in particular in rational solutions of (HWM).

For a function $m:\mathbb{R}\times \mathbb{R} \to \mathbb{S}^2$, we can make the pole ansatz (as in \cite{matsuno2022integrability}, \cite{airault1977rational} or \cite{lenzmann2018short})

\begin{equation}
    m(t,x) = m_0 + \sum_{j=1}^N \frac{s_j(t)}{x-x_j(t)} + \sum_{j=1}^N \frac{\bar s_j(t)}{x-\bar x_j(t)}.
\end{equation}

In order to ensure that $m$ stays on the sphere on the whole line, we also need the constraints 
\begin{equation}
\mathbf{s}_{j, 0}^2=0, \quad \mathbf{s}_{j, 0} \cdot\left(\mathrm{im}_0-\sum_{k \neq j}^N \frac{\mathbf{s}_{k, 0}}{x_{j, 0}-x_{k, 0}}+\sum_{k=1}^N \frac{\mathbf{s}_{k, 0}^*}{x_{j, 0}-x_{k, 0}^*}\right)=0, \quad(j=1,2, \ldots, N).
\end{equation}

Those solutions are called rational solutions. 

Recently, a Lax pair structure has been found for the half-wave maps equation in \cite{gerard2018lax} by E. Lenzmann and P. G\'erard. Among other important properties, this implies in particular that the system is completely integrable and that an infinite number of quantities are preserved. Using Kronecker's theorem for finite rank Hankel operators, they also show that a rational solution of (HWM) at a given time stays rational for all time. In \cite{berntson2020multi}, B. K. Berntson, R. Klabbers and E. Langmann show that the (HWM) is satisfied if and only fine the poles and spins evolve according to the dynamics of an exactly solvable spin Calogero-Moser system given by

\begin{equation}
\dot{\mathbf{s}}_j=-2 \sum_{k \neq j}^N \frac{\mathbf{s}_j \wedge \mathbf{s}_k}{\left(a_j-a_k\right)^2},
\end{equation}

and 

\begin{equation}
\ddot{a}_j=4 \sum_{k \neq j}^N \frac{\mathbf{s}_j \cdot \mathbf{s}_k}{\left(a_j-a_k\right)^3}.
\end{equation}

Using these equations, Y. Matsuno has provided in \cite{matsuno2022integrability} explicit formulas for preserved quantities, as well as an expression for the asymptotic behavior in the two solitons case $(N=2)$. In \cite{berntson2020multi}, the authors present exact solutions, displaying solitary spin excitations moving with different velocities and interacting in a non-trivial way. Finally, in \cite{lenzmann2018energy}, E. Lenzmann and A. Schikorra give a complete classification of the traveling solitary waves of finite energy of the form $u(t,x)=Q_v(x-vt)$. These waves are rational and can be expressed using a \emph{finite Blaschke product} and a rotation $R \in SO(3)$. 

Beyond this, the study of the half-wave maps equation (HWM) remains a nascent field, and we are only at the initial stages of understanding. For instance, solutions for $N$ other than $2$ are unknown except for very specific forms, and it remains unclear whether blow-up can occur in finite or infinite time. In this article, we provide proof that the energy-critical half-wave maps equation is well-posed for large times in the non-degenerate case, and that (HWM) is well-posed under the assumption that the poles are separated. 

We start by dealing with the case $N=2$ with a new method through which we corroborate previously established results. In this scenario, the analytic process is simplified, given that the equations and constraints culminate in explicit formulas, enabling the straightforward derivation of results. 
We then consider the large-time and infinite-time case for $N$ solitons. The key argument is to take advantage of the constraints and preserved quantities, which we choose to write using a fractional calculus formula, as we did in \cite{ohlmann2021ill} to show the ill-posedness of the quasi-linear wave equation in dimension $2$ in $H^{7/4}$. We only consider the case where one pole is going toward the real axis. The results and inequalities established are later re-used for the general case.
Later, we show in section \ref{sec:N,sep} the well-posedness of (HWM) under the assumption that the poles are separated. To achieve this, we first perform an analysis similar to what we do in section \ref{N,diff,1pole} for a more specific case, but some of the estimates we had no longer hold as we reason for fixed times. In particular, the time evolution equations of the spins and poles do not lead to a vanishing factor anymore so the estimates are more intricate. We deal with the fully general case using an expression for the preserved quantity provided in \cite{matsuno2022integrability}.
Finally, we provide in section \ref{sec:turbu} explicit bounds for the spins and the imaginary part of the poles, quantifying the uniform non-turbulence in our situations, and complete the argument for the absence of infinite-time blow-up in the non-degenerate case.

\subsection*{Definitions and main result}

We recall the half-wave maps equation

\begin{equation}\tag{HWM}\label{HWM}
    m_t (t,x) = m(t,x) \times |\nabla| m(t,x).
\end{equation}
We consider functions in spatial dimension $1$ onto the sphere:
\begin{equation}
    m:\mathbb{R} \times \mathbb{R} \to \mathbb{S}^2,
\end{equation}
belonging in the $\dot H^{1/2}(\mathbb{R})$ Sobolev space at all time, i.e.
\begin{equation}
    ||m(t,\cdot)||_{\dot H^{\frac{1}{2}}(\mathbb{R})}^2 = \int_{\xi \in \mathbb{R}} |\xi| |\hat m(t,\xi)|^2 d\xi < \infty,
\end{equation}
where $\hat f$ denotes the standard Fourier Transform of $f$.

For such functions, we make the pole ansatz \cite{airault1977rational}

\begin{equation}\label{expressionSol}\tag{PA}
m(x,t)= m_0 +i \sum_{j=1}^N \frac{s_j(t)}{x-x_j(t)} - i \sum_{j=1}^N \frac{\bar s_j(t)}{x-\bar x_j(t)},
\end{equation}	

where the spins $s_j$ belong to $\mathbb{C}^3$ and the poles $x_j$ belong to the upper-half plane $\mathbb{C}^+$, and $\bar{z}$ stands for the complex conjugate of $z$. We denote by $\cdot$ the product $s \cdot v = s_1 v_1 + s_2 v_2 + s_3 v_3$.

$m$ solves \eqref{HWM} if the spins $s_j$ and poles $x_j$ satisfy the system

\begin{equation}\tag{CM}\label{CM}
\left\{
\begin{aligned}
& \dot{\mathbf{s}}_j(t)=-2 \sum_{k \neq j}^N \frac{\mathbf{s}_j(t) \times \mathbf{s}_k(t)}{\left(x_j(t)-x_k(t)\right)^2}, \quad(j=1,2, \ldots, N), \\
& \ddot{x}_j(t)=4 \sum_{k \neq j}^N \frac{\mathbf{s}_j(t) \cdot \mathbf{s}_k(t)}{\left(x_j(t)-x_k(t)\right)^3}, \quad(j=1,2, \ldots, N),
\end{aligned}
\right.
\end{equation}
with the initial conditions $\mathbf{s}_j(0)=\mathbf{s}_{j, 0}, x_j(0)=x_{j, 0}$ and
\begin{equation}\tag{IC}\label{conditions}
\dot{x}_j(0)=\frac{\mathbf{s}_{j, 0} \times \mathbf{\bar s}_{j, 0}}{\mathbf{s}_{j, 0} \cdot \mathbf{\bar s}_{j, 0}} \cdot\left(\mathrm{im}_0-\sum_{k \neq j}^N \frac{\mathbf{s}_{k, 0}}{x_{j, 0}-x_{k, 0}}+\sum_{k=1}^N \frac{\mathbf{\bar s}_{k, 0}}{x_{j, 0}-\bar x_{k, 0}}\right), \quad(j=1,2, \ldots, N) .
\end{equation}

This system of equation \eqref{CM} for $s_j$ and $x_j$ admits a Lax pair \cite{gerard2018lax} \cite{matsuno2022integrability} of the form
\begin{equation}
    \dot L = [B,L] = BL - LB,
\end{equation}
where $L$ is defined as
\begin{equation}
L=\left(l_{j k}\right)_{1 \leq j, k \leq N}, \quad l_{j k}=\delta_{j k} \dot{x}_j+\left(1-\delta_{j k}\right) \frac{\epsilon_{j k} \sqrt{2 s_j \cdot s_k}}{x_j-x_k},
\end{equation}

and $L$ is defined as 
\begin{equation}
B=\left(b_{j k}\right)_{1 \leq j, k \leq N}, \quad b_{j k}=\left(1-\delta_{j k}\right) \frac{\epsilon_{j k} \sqrt{2 s_j \cdot s_k}}{\left(x_j-x_k\right)^2},
\end{equation}

where $\delta_{j k}$ is the usual Kronechker's function and $\epsilon_{j k}$ is an anti-symmetric symbol defined by $\epsilon_{j k}= - \epsilon_{k j}$ and $\epsilon_{j k}^2 = 1 - \delta_{j k}$. In particular, this implies the very important other formulation for the time evolution of the poles
\begin{equation}
    \dot X = L + [B,X],
\end{equation}

where $X$ is a diagonal matrix containing the poles $x_j$ at coordinate $(j,j)$. By defining $\dot U = BU$ and considering $J = J(t) = U^{-1}(t) X(t) U(t)$, it follows 

\begin{equation}
    J(t) = L(0) t + X(0),
\end{equation}

which implies

\begin{equation}
    X(t) = U(t) \left[ L(0) t + X(0) \right] U^{-1} (t).
\end{equation}

This means that the spectrum of $X(t)$ is the same as the spectrum of $L(0) t + X(0)$. As a consequence, we have that for large times, the poles $x_i$ are equivalent to $v_i t$, where we denoted by $v_i$ the eigenvalues of $L$. It can be shown that the eigenvalues $v_i$ are real and that they satisfy $|v_i|\leq 1$. Throughout this article, we will say that the spectrum is \emph{non-degenerate} when $v_i \neq v_j$ for any $i \neq j$. 

We now state our main theorem.
\begin{theorem}
    Let $m$ be a solution of \eqref{HWM} of the form \eqref{expressionSol}. 
    \begin{itemize}
        \item[(i)] Assuming $v_i\neq v_j$ for $i \neq j$, then for $t>T$ large enough, including $t=+\infty$, the spins are globally bounded and the poles stay away from the real axis. 
        \item[(ii)] Assuming that the poles stay away from each other, the spins are globally bounded and the poles stay away from the real axis.
    \end{itemize}
\end{theorem}

Before this, we describe the dynamics in the two solitons case for large and infinite times.

\section{Two solitons case, infinite time}
	
In this section, we show the absence of blow-up at infinite time in the non-degenerate case ($v_1\neq v_2$) for the case $N=2$. 

When $N=2$, (\ref{expressionSol}) becomes 

\begin{equation}\label{expressionSol2}
m(x,t)= m_0 +i \frac{s_1(t)}{x-x_1(t)}- i \frac{\bar s_1(t)}{x-\bar x_1(t)}+i \frac{s_2(t)}{x-x_2(t)} - i \frac{\bar s_2(t)}{x-\bar x_2(t)},
\end{equation}

and (\ref{CM})

\begin{equation}\label{evolution2}
\begin{aligned}
&\dot s_1(t) = -2  \frac{s_1(t) \times s_2(t)}{\left(x_1(t) - x_2(t)\right)^2}, \\
&\dot s_2(t) = -2  \frac{s_2(t) \times s_1(t)}{\left(x_2(t) - x_1(t)\right)^2}, \\
&\ddot x_1(t) = 4 \frac{s_1(t) \cdot s_2(t)}{\left(x_1(t) - x_2(t)\right)^3}\\
&\ddot x_j(t) = 4 \frac{s_2(t) \cdot s_1(t)}{\left(x_2(t) - x_1(t)\right)^3}.
\end{aligned}
\end{equation}

The main result of this section is stated in the next theorem.

\begin{theorem}
	If we assume that $Im(x_1)(t) \rightarrow 0$ as $t\rightarrow \infty$, and that $Im(x_1)(t),~Im(x_2)(t) \neq 0$ for any $t$, we have a contradiction. 
\end{theorem}

\begin{proof}(a)
	We first note that (\ref{evolution2}) gives $\dot s_1(t) + \dot s_2(t) = 0$, and $\ddot x_1(t) + \ddot x_2(t) = 0.$ 
	
From \cite{matsuno2022integrability}, we know that there exist $\alpha_1, \alpha_2$ such that
\begin{equation}\label{Mats}
	\begin{aligned}
v_1 + v_2 &= \dot x_1 + \dot x_2, ~ (i)\\
\alpha_1+\alpha_2 &= -(x_1(0)+x_2(0)), ~ (ii) \\
x_1(t) &= v_1 t -\alpha_1 + o(1),~ (iii) \\
x_2(t) &= v_2 t -\alpha_2 + o(1).~ (iv)\\
	\end{aligned}
\end{equation}

This already gives that $Im(x_1)$ and $Im(x_2)$ have a limit. In particular, $Im(x_1)$ and $Im(x_2)$ are bounded.

We look again at (\ref{expressionSol}). We see that

\begin{equation}\label{1}
	i\left( \frac{s_1(t)}{x - x_1(t)} - \frac{\bar s_1(t)}{x - \bar x_1(t)} \right) = -2  \cdot Im\left( \frac{s_1(t)}{x - x_1(t)} \right),
\end{equation}

and 

\begin{equation}\label{2}
\frac{s_1(t)}{x-x_1(t)} = \frac{\left[ (x-Re(x_1)(t)) + i Im(x_1)(t) \right] s_1(t)}{(x-Re(x_1)(t))^2 + (Im(x_1)(t))^2}.
\end{equation}

Hence, plugging (\ref{1}) and (\ref{2}) into (\ref{expressionSol}) yields

\begin{multline}\label{expressionSolfactor}
m(x,t)= m_0 - 2 Im\left[ \frac{\left[ (x-Re(x_1)(t)) + i Im(x_1)(t) \right] s_1(t)}{(x-Re(x_1)(t))^2 + (Im(x_1)(t))^2} \right] \\- 2 Im\left[ \frac{\left[ (x-Re(x_2)(t)) + i Im(x_2)(t) \right] s_2(t)}{(x-Re(x_2)(t))^2 + (Im(x_2)(t))^2} \right].
\end{multline}

Now, if we consider the choice of $x=Re(x_1)$, we obtain

\begin{equation}\label{expressionSolfactor1}
m(x,t)= m_0 - 2 Im\left( \frac{i s_1(t)}{Im(x_1)} \right) - 2 Im\left( \frac{s_2(t)}{x - x_2(t)} \right).
\end{equation}

Now, because $\dot s_1(t)+\dot s_2(t)=0$, we have that $s_2(t)=a-s_1(t)$. We can then modify (\ref{expressionSolfactor1}) using the fact that

\begin{equation}
Im\left( \frac{s_2(t)}{x - x_2(t)} \right)= Im\left(\frac{a}{x-x_2(t)} \right) - Im\left(\frac{s_1(t)}{x-x_2(t)} \right).
\end{equation}

Finally, with $x=Re(x_1)$,

\begin{multline}\label{expressionSolfactor2}
m(x,t)= m_0 - 2 Im\left( \frac{i s_1(t)}{Im(x_1)} \right) - 2 Im\left(\frac{a}{x-x_2(t)} \right) - 2 Im\left(\frac{s_1(t)}{x-x_2(t)} \right)\\
=m_0 + 2 \frac{Re(s_1)(t)}{Im(x_1)} - 2 Im\left(\frac{a}{x-x_2(t)} \right) - 2 Im\left(\frac{s_1(t)}{x-x_2(t)} \right).
\end{multline}

Now, it is easy (really) to see that for $t$ big enough,

\begin{equation}
|\delta|=\left| \frac{a}{x-x_2} \right| \leq \frac{C}{t^2}.
\end{equation}

For the third term, we write

\begin{equation}
	Im\left(\frac{s_1(t)}{x-x_2(t)} \right)=\frac{Re(s_1)Im(x_2)+Im(s_1)Re(x_1-x_2)}{\left( Re(x_1)-Re(x_2) \right)^2 + Im(x_2)^2 }
\end{equation}

Now, (\ref{expressionSolfactor2}) becomes

\begin{multline}\label{expressionSolfactor3}
m(x,t)= m_0 -2 \delta + Re(s_1)\cdot \left( \frac{2}{Im(x_1)} -\frac{2 \cdot Im(x_2)}{\left( Re(x_1)-Re(x_2) \right)^2 + Im(x_2)^2 } \right) \\+ \frac{Im(s_1) Re(x_1-x_2)}{\left( Re(x_1)-Re(x_2) \right)^2 + Im(x_2)^2 }\\
= m_0 -2 \delta + Re(s_1) \cdot \beta + Im(s_1) \cdot \gamma,
\end{multline}

where it is easy to see that $\delta\rightarrow 0$, $\beta \rightarrow +\infty$, $\gamma\rightarrow 0$. Our ultimate goal for now is to show that $Re(s_1)$ and $Im(s_1)$ must go toward 0.

From (\ref{expressionSolfactor3}), we get that either $Re(s_1) \rightarrow 0$, or $Im(s_1) \rightarrow +\infty$ (quickly). We want to rule out the second case. To do so, we will chose another value for $x$, close to $Re(x_1)$.

We consider (\ref{expressionSolfactor}) and choose $x=Re(x_1)+Im(x_1) \in \mathbb{R}$. We obtain

\begin{multline}\label{otherx}
m(x,t)= m_0 - 2 Im\left[ \frac{\left[ Im(x_1)(t)) + i Im(x_1)(t) \right] s_1(t)}{2\cdot (Im(x_1)(t))^2} \right] \\- 2 Im\left(\frac{a}{x-x_2(t)} \right) - 2 Im\left(\frac{s_1(t)}{x-x_2(t)} \right).
\end{multline}

Now, we have

\begin{equation}
- 2 Im\left[ \frac{\left[ Im(x_1)(t)) + i Im(x_1)(t) \right] s_1(t)}{2\cdot (Im(x_1)(t))^2} \right] = - \frac{Im(s_1)+Re(s_1)}{Im(x_1)}.
\end{equation}

Again, we can see that (the $'x'$ is not the same, hence the change of letter)

\begin{equation}
	|\mu| =\left| \frac{a}{x-x_2} \right| \leq \frac{C}{t^2}.
\end{equation}

Now, for the third term, we obtain

\begin{equation}
	-2 Im\frac{s_1}{x-s_2(t)} = \frac{Re(s_1) Im(x_2) + Im(s_1) \left( Re(x_1)-Re(x_2) + Im(x_1) \right)}{\left( Re(x_1)-Re(x_2) + Im(x_1) \right)^2 + Im(x_2)^2 }.
\end{equation}

Hence, up to some simplifications, (\ref{otherx}) becomes

\begin{equation}\label{otherx2}
m(x,t) = m_0 - \mu - Re(s_1) \nu + Im(s_1) \xi,	
\end{equation}

where $\mu\rightarrow 0$, $\nu\rightarrow +\infty$, $\xi \rightarrow +\infty$. 
But now, if we are in the second case previously described, $Im(s_1)$ is of a bigger order than $Re(s_1)$, but $\nu$ and $\xi$ are of the same order, so it can not be of norm 1.

This means $Re(s_1) \rightarrow 0$, and hence $Im(s_1)\rightarrow 0$ from (\ref{otherx2}). Using (\ref{evolution2}), we obtain that $Re(s_2) \rightarrow l_1$ and $Im(s_2)\rightarrow l_2$. In particular, there exists $c>0$ such that

\begin{equation}
	|s_2| \leq C,~ \forall t
\end{equation}

Now, we will use the fact that

\begin{equation}
	\dot s_1(t) = - 2 \frac{s_1 (t) \times s_2(t)}{\left( x_1(t)-x_2(t) \right)^2},
\end{equation}

to obtain a contradiction. Since we have 

\begin{equation}
	(x_1-x_2)^2 \sim (v_1-v_2)^2 \cdot t^2,
\end{equation}

We obtain 

\begin{equation}
\left| \dot{|s_1|} \right| \leq \left| \dot s_1(t) \right| \leq \frac{C\cdot |s_1(t)|}{t^2}.
\end{equation}

Now, we remark that $\dot{|s_1|}<0$, so $\left| \dot{|s_1|} \right|=-\dot{|s_1|}$. We then have

\begin{equation}
\ln(|s_1|(t_0))-\ln(|s_1|(t_1))   =\int_{t=t_0}^{t_1} \frac{-\dot{|s_1|}}{|s_1|} \leq \int_{t=t_0}^{t_1} \frac{C}{t^2} = C \left( \frac{1}{t_0}-\frac{1}{t_1} \right).
\end{equation}

Now, taking $t_1\rightarrow +\infty$ yields a contradiction.

\end{proof}

\section{No infinite time blow-up with different speeds, specific case}\label{N,diff,1pole}
	
In this section, we will show the absence of an infinite time blow-up in the non-degenerate case, i.e. $v_i\neq v_j$ for $i\neq j$, when $m$ is the sum of $N$ solitons. To do so, we will show that the imaginary part of the role can not converge toward zero. In fact, we can easily adapt the proof to show that there exists $C>0$ such that the imaginary parts of the poles always stay bigger than $C$.

We state our result in the following theorem.

\begin{theorem}\label{majeur}
Let $m:\mathbb{R}\times \mathbb{R} \rightarrow S\subset \mathbb{R}^3$ be a solution of the Cauchy problem
 	
\begin{equation}
\left\{
\begin{aligned}
&m_t(x,t) = m(x,t) \times |\nabla| m(x,t),~ ~ \text{ for all } (x,t) \in \mathbb{R}^2, \\
&m(x,0)= m_0 +i \sum_{j=1}^N \frac{s_j^0}{x-x_j^0} - i \sum_{j=1}^N \frac{\bar s_j^0}{x-\bar x_j^0}, ~ ~ \text{ for all } x \in \mathbb{R}.
\end{aligned}
\right.
\end{equation}
	
We know that $m$ is of the form 
\begin{equation}\label{expressionSoln}
m(x,t)= m_0 +i \sum_{j=1}^N \frac{s_j(t)}{x-x_j(t)} - i \sum_{j=1}^N \frac{\bar s_j(t)}{x-\bar x_j(t)}.
\end{equation}	

We make the following assumption.

\begin{enumerate}
	\item Non-degenerate: for all $i,j$ such that $i\neq j$, $v_i\neq v_j$.
	\item No finite time blow-up: For all $t \in \mathbb{R}$ and $i\in \{1,...,N\}$, we have $Im(x_i(t)) \neq 0 $.
	\item Only one pole causing the blow-up: There exists $C>0$ such that for any $i\neq 1$ and $t\in \mathbb{R}$, we have $\left| Im(x_i(t)) \right| >C $.
\end{enumerate}

Then, assuming $Im(x_1) \rightarrow 0$ yields a contradiction.

\end{theorem}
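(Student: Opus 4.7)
The plan is to mimic the two-soliton proof from the preceding section, with the key observation that assumptions 1 and 3 together localize the problem near $x_1$: non-degeneracy plus the uniform lower bound on $Im(x_i)$ for $i \neq 1$ mean that every pole except $x_1$ stays bounded away from the real axis, and all pairwise distances $|x_1-x_k|$ grow linearly in $t$. This suggests isolating $s_1$ by evaluating $m$ at two carefully chosen real points near $Re(x_1)$, exactly as in the two-soliton argument.

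First I would collect the ingredients. From the general Matsuno-type asymptotics, non-degeneracy gives $x_j(t) = v_j t - \alpha_j + o(1)$ for each $j$. Summing (\ref{evolution})(i) over $j$ and using antisymmetry of the cross product yields that $\sum_j s_j(t) = \sum_j s_j^0 =: a$ is a constant vector. Then, plugging $x = Re(x_1)$ and afterwards $x = Re(x_1) + Im(x_1)$ into (\ref{expressionSoln}) and isolating the singular contribution of $s_1$ exactly as in (\ref{expressionSolfactor1}) and (\ref{otherx}), the $s_1$ piece is of order $1/Im(x_1)$, while every contribution from a pole $x_k$ with $k\neq 1$ is bounded: indeed, $|Re(x_1) - x_k| \geq c|v_1 - v_k|\, t$ makes that term of order $|s_k(t)|/t$. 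Using $|m|=1$ and $Im(x_1)\to 0$, this forces $Re(s_1)$ and then $Im(s_1)$ to $0$, so $s_1(t) \to 0$.

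Once $s_1\to 0$ is in hand, I would run the same integral bound as at the end of the previous proof. The estimate $|x_1-x_k|^2 \geq c(v_1-v_k)^2 t^2$, together with a uniform bound $|s_k|\leq M$ for $k\neq 1$, yields $|\dot{|s_1|}| \leq C |s_1|/t^2$, so $|\ln|s_1|(t_1) - \ln|s_1|(t_0)| \leq C(1/t_0 - 1/t_1)$; the right-hand side stays bounded as $t_1\to\infty$, while the left-hand side diverges since $|s_1|\to 0$. Contradiction.

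The main obstacle is precisely the uniform bound $|s_k|\leq M$ for $k\neq 1$. In the $N=2$ case this came for free from $s_1+s_2 = a$, but for general $N$ the conservation law only constrains the sum. My plan is to derive it by running the same ``evaluate $m$ at a real point close to $Re(x_k)$'' trick for each such $k$: assumption 3 gives $Im(x_k)\geq C$, and non-degeneracy gives $|x_k - x_j| \geq c|v_k-v_j|\, t$ for $j\neq k$, so all contributions from other poles at, say, $x = Re(x_k) + Im(x_k)$ are bounded uniformly in $t$; the singular term of order $|s_k|/Im(x_k)$ is then constrained by $|m|=1$. Alternatively one might invoke a conserved $L^2$-type quantity from the Lax-pair structure of \cite{gerard2018lax}. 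Either route feeds the three-step scheme above.
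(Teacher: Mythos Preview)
Your three-step scheme---bound the spins, force $s_1\to 0$, derive a Gronwall contradiction from $|\dot s_1|\le C|s_1|/t^2$---matches the paper's architecture exactly, and your final step is identical to the paper's. The first two steps, however, use genuinely different tools.

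For boundedness of the spins, the paper does not rely on the bare evaluation of $m$. It instead takes the \emph{scalar} quantity $s_{i_0}\cdot i\,m(Re(x_{i_0}))$ and subtracts from it the algebraic constraint~(\ref{conditions}); the subtraction isolates $|s_{i_0}|^2/Im(x_{i_0})$ on one side and leaves only difference terms of size $|Im(x_{i_0})|\,|s_j|/t^2$ on the other, yielding $|s_{i_0}|^2/|Im(x_{i_0})|\le 1+C|s_{i_0}|(\max_j|s_j|)/t^2$, which closes by taking $i_0$ to achieve the maximum. Your route---read off $|Re(s_k)|\le Im(x_k)\bigl(C+C'\max_j|s_j|/t\bigr)$ from $|m(Re(x_k))|=1$ and then use $|s_k|=\sqrt2\,|Re(s_k)|$---also closes via the same bootstrap $M\le C(1+M/t)$, but your phrase ``all contributions from other poles \ldots are bounded uniformly in $t$'' is not literally true before $M$ is bounded; the self-referential step should be made explicit.

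For $s_1\to 0$, the paper abandons the two-point trick entirely and instead shows that the conserved $H^{1/2}$ energy would diverge: it lower-bounds the contribution of the $x_1$-soliton to $\|m\|_{\dot H^{1/2}}^2$ by $c\,|Re(s_1)|^2/|Im(x_1)|$ while the remaining solitons contribute $O(1)$, forcing $Re(s_1)\to 0$ and hence $s_1\to 0$. Your direct extension of the $N=2$ two-point argument is more elementary---it needs only $|m|=1$ and the already-established spin bound---and works here precisely because assumption~3 keeps all other poles away from the axis. The energy route is heavier but generalizes more readily when assumption~3 is dropped (several poles approaching the axis simultaneously), which is why the paper develops it.
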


We first study the long-term behavior of the spins. In the following lemma, we show that the spins are bounded.

\begin{lemma}\label{lemma1}
	There exists a constant $C$ such that for any $t\in \mathbb{R}$, we have
	\begin{equation}
		|s_j(t)| \leq C.
	\end{equation}
\end{lemma}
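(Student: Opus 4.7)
The plan is to close a bootstrap on $M(t) := \max_j |s_j(t)|$ by combining the long-time linear separation of the poles (from a Matsuno-type asymptotic in the non-degenerate regime) with the pointwise constraint $|m(x,t)| = 1$ on the real axis and the conservation law $\sum_j s_j(t) = \text{const}$.

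First I would invoke the $N$-soliton analogue of the Matsuno expansion used in Section 2 to obtain $x_j(t) = v_j t + O(1)$ as $t \to \pm \infty$, with all $Im(x_j(t))$ bounded and convergent to finite limits. By non-degeneracy this furnishes constants $T_1, \beta, L > 0$ such that for all $t \geq T_1$ and $i \neq j$,
\begin{equation*}
|Re(x_i(t)) - Re(x_j(t))| \geq \beta t, \qquad |Im(x_j(t))| \leq L.
\end{equation*}

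Next, for each $k \neq 1$, I would evaluate $|m(y,t)| = 1$ at the two real points $y = Re(x_k(t))$ and $y = Re(x_k(t)) + Im(x_k(t))$. Writing $m(y,t) = m_0 - 2 \sum_j Im(s_j/(y - x_j))$, the $j=k$ term contributes a quantity linear in $Re(s_k)/Im(x_k)$ (respectively $(Re(s_k)+Im(s_k))/Im(x_k)$), exactly as in the two-soliton calculation above, while each $j \neq k$ term is bounded by $|s_j|/|Re(x_k) - Re(x_j)| \leq C|s_j|/(\beta t)$. Using $C_0 \leq Im(x_k) \leq L$ (from assumption 3 and the Matsuno bound) together with $|m|, |m_0| \leq 1$, these two evaluations yield
\begin{equation*}
|s_k(t)| \leq A_1 + \frac{A_2}{t}\, M(t), \qquad k \neq 1,\ t \geq T_1,
\end{equation*}
with $A_1, A_2$ depending only on $C_0, L, \beta, N$. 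To handle $s_1$ I would invoke the conservation law $\sum_j s_j(t) = \sum_j s_j(0) =: S_0$, which follows at once from summing the spin ODE and using the antisymmetry $s_j \times s_k = -s_k \times s_j$; this yields $|s_1(t)| \leq |S_0| + \sum_{k \neq 1} |s_k(t)|$ and, combined with the previous estimate, the self-referential inequality $M(t) \leq A + (B/t) M(t)$ for all $t \geq T_1$. As soon as $t \geq \max(T_1, 2B)$ this closes to $M(t) \leq 2A$. Continuity of $M$ on the compact interval $[0, \max(T_1, 2B)]$ (guaranteed by assumption 2) and the symmetric argument for $t \to -\infty$ (from the time-reversibility of the HWM equation) then give the claimed uniform bound.

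The main obstacle is obtaining the $M(t)/t$ factor (as opposed to plain $M(t)$) in the auxiliary bound on $|s_k|$: this is what makes the bootstrap close and it rests crucially on the linear separation of the poles provided by non-degeneracy. A secondary concern is the extension of Matsuno's asymptotic from the two-soliton case to general $N$, needed for both the separation estimate and the upper bound $Im(x_j) \leq L$; without it, one would be forced to replace the above short argument by a combined Gronwall/Riccati bootstrap on $M$ using $|\dot s_j| \leq C M^2/t^2$ from the spin ODE.
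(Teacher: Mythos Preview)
Your approach is correct and reaches the same bootstrap $M(t)\le A + (B/t^\alpha)M(t)$ as the paper, but the route to that inequality is different.

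The paper does \emph{not} evaluate $|m|=1$ at two nearby real points. Instead, for each index $i_0$ it evaluates $m$ at the single point $x=Re(x_{i_0})$ and then takes the \emph{scalar product with $s_{i_0}$}, combining the result with the algebraic constraint~(\ref{conditions}). This produces an identity whose dominant term is exactly $\tfrac{1}{2}|s_{i_0}|^2/Im(x_{i_0})$, while every cross term picks up a factor $Im(x_{i_0})/|x_{i_0}-x_j|^2 \lesssim L/((v_{i_0}-v_j)^2t^2)$. Dividing through by $|s_{i_0}|$ and using only the upper bound $Im(x_{i_0})\le L$ (obtained from $\sum_j Im(x_j)=\text{const}$, all terms nonnegative) gives $|s_{i_0}|\le C + (C/t^2)M(t)$ for \emph{every} $i_0$, including $i_0=1$.

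Compared with this, your argument imports the two-point trick from Section~2 and therefore treats $k\neq 1$ and $k=1$ asymmetrically, using assumption~3 for the former and the conservation law $\sum_j s_j=\text{const}$ for the latter. Both of these extra ingredients are in fact unnecessary: the two-point evaluation at $Re(x_k)$ and $Re(x_k)+Im(x_k)$ yields $|Re(s_k)|,\,|Im(s_k)|\le Im(x_k)\bigl(C+CM(t)/t\bigr)$, which needs only the \emph{upper} bound $Im(x_k)\le L$ and works equally well for $k=1$. So your proof is sound but slightly over-engineered; the paper's use of the constraint~(\ref{conditions}) is what lets it avoid the case split and get the sharper $1/t^2$ decay in one stroke.

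Your closing remarks about the Matsuno asymptotic are apt: both proofs implicitly use $|x_j-x_k|\gtrsim |v_j-v_k|\,t$ for large $t$, and neither can close without some version of this linear separation.
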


\begin{proof}

We now want to show that $s_j$ are bounded. To do so, we will study the expression of $m$ given by (\ref{expressionSoln}), and put it in perspective with condition (\ref{conditions}). We apply $x=Re(x_{i_0})$ to (\ref{expressionSoln}) and obtain

\begin{equation}
m(Re(x_{i_0}))=m_0 +i \sum_{j=1}^N \frac{s_j(t)}{Re(x_{i_0})-x_j(t)} - i \sum_{j=1}^N \frac{\bar s_j(t)}{Re(x_{i_0})-\bar x_j(t)}
\end{equation}

We study $s_{i_0} \cdot i m(Re(i_{x_0}))$ and obtain

\begin{equation}\label{thisisanew}
s_{i_0} \cdot i m(Re(i_{x_0})) = s_{i_0} \cdot \left(i m_0 - \sum_{j=1}^N \frac{s_j(t)}{Re(x_{i_0})-x_j(t)} + \sum_{j=1}^N \frac{\bar s_j(t)}{Re(x_{i_0})-\bar x_j(t)}\right).
\end{equation}

Now, we plug in condition (\ref{conditions}) for $i=i_0$ and combine it with (\ref{thisisanew}) to obtain

\begin{multline}\label{Dprime}
s_{i_0} \cdot i m(Re(i_{x_0})) \\= s_{i_0} \cdot \left( \sum_{j\neq i_0} \frac{s_j(t)}{x_{i_0}-x_j(t)} - \frac{s_j(t)}{Re(x_{i_0})-x_j(t)} +\sum_{j\neq i_0} \frac{\bar s_j(t)}{Re(x_{i_0})-\bar x_j(t)} - \frac{\bar s_j(t)}{x_{i_0}-\bar x_j(t)} \right) \\
+ \frac{|s_{i_0}|^2}{Re(x_{i_0})-\bar x_{i_0}(t)} - \frac{ |s_{i_0}|^2}{x_{i_0}-\bar x_{i_0}(t)}
\end{multline}

For the last difference term of (\ref{Dprime}), we quickly obtain that
\begin{equation}\label{Cprime}
\frac{|s_{i_0}|^2}{Re(x_{i_0})-\bar x_{i_0}(t)} - \frac{ |s_{i_0}|^2}{x_{i_0}-\bar x_{i_0}(t)} = \frac{1}{2} \cdot \frac{|s_{i_0}|^2}{Im(x_{i_0})}.
\end{equation}

For the first difference term of (\ref{Dprime}), we obtain

\begin{equation}\label{A}
\frac{s_j(t)}{x_{i_0}-x_j(t)} - \frac{s_j(t)}{Re(x_{i_0})-x_j(t)} = \frac{- Im(x_{i_0})\cdot s_j}{(Re(x_{i_0})-x_j(t))\cdot (x_{i_0}-x_j(t)) }
\end{equation}

The second term being almost the conjugate of the first one, we obtain

\begin{equation}\label{B}
-\frac{\bar s_j(t)}{x_{i_0}-\bar x_j(t)} + \frac{\bar s_j(t)}{Re(x_{i_0})-\bar x_j(t)} = \frac{ Im(x_{i_0})\cdot \bar s_j}{(Re(x_{i_0})-\bar x_j(t))\cdot (x_{i_0}-\bar x_j(t)) }
\end{equation}

We will now provide upper-bounds since we want these difference terms to be small. (\ref{A}) leads to

\begin{equation}\label{Aprime}
\left|\frac{s_j(t)}{x_{i_0}-x_j(t)} - \frac{s_j(t)}{Re(x_{i_0})-x_j(t)}\right| \leq \frac{|Im(x_{i_0})|\cdot |s_j|}{|v_{i_0}-v_j|^2 \cdot t^2},
\end{equation}

and (\ref{B}) leads to

\begin{equation}\label{Bprime}
\left|\frac{\bar s_j(t)}{x_{i_0}-\bar x_j(t)} + \frac{\bar s_j(t)}{Re(x_{i_0})-\bar x_j(t)}\right| \leq \frac{|Im(x_{i_0})|\cdot |s_j|}{|v_{i_0}-v_j|^2 \cdot t^2}.
\end{equation}

Overall, since $|s_{i_0} \cdot i m| \leq |s_{i_0}|$, we obtain using (\ref{Cprime}), (\ref{Aprime}) and (\ref{Bprime}) and the triangular inequality in (\ref{Dprime}) that

\begin{equation}
\frac{|s_{i_0}|^2}{|Im(x_{i_0})|} \leq 1 + 2 \sum_{j\neq i_{0}} \frac{|Im(x_{i_0})|\cdot |s_j| \cdot |s_{i_0}|}{|v_{i_0}-v_j|^2 \cdot t^2}.
\end{equation}

Hence, \textbf{if $Im(x_{i_0})$ is bounded}, we have by considering the supremum of $|s_{k}|$ over all $k$ that $|s_j|$ is bounded for every $j$.

\end{proof}
	
We first show that the spin $s_1$ has to converge to $0$. To do so, we will use the fact that the $H^{1/2}$ norm of the solution is constant.
	
\begin{lemma}
	Under the same assumptions, we have
	\begin{equation}
		|s_1|\rightarrow 0,\text{ as } t\rightarrow \infty.
	\end{equation}
\end{lemma}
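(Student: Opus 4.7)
The plan is to invoke conservation of the half--wave maps energy
$E(m) = \tfrac12 \int m\cdot|\nabla|m\,dx = \tfrac12\|m-m_0\|_{\dot H^{1/2}}^2,$
which is constant along the flow because $m_t\cdot|\nabla|m = (m\times|\nabla|m)\cdot|\nabla|m = 0$. The expectation is that once $E(m)$ is rewritten in Fourier variables on the ansatz (\ref{expressionSoln}), each pole $x_j$ will contribute a diagonal piece of order $|s_j|^2/Im(x_j)^2$, while the cross contributions between distinct poles should decay in time thanks to the non--degeneracy $v_j\neq v_k$. Conservation would then trap $|s_1|^2/Im(x_1)^2$ below a constant, so that $Im(x_1)\to 0$ forces $|s_1|\to 0$.

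To implement this, I would first compute the Fourier transform of the building block $f_j(x) = \tfrac{is_j}{x-x_j} - \tfrac{i\bar s_j}{x-\bar x_j}$ by a contour argument, using that $x_j\in\mathbb{C}^+$ and $\bar x_j\in\mathbb{C}^-$, which gives
$\widehat{f_j}(\xi) = -2\pi\, s_j\, e^{-i\xi x_j}\mathbf{1}_{\xi<0} - 2\pi\,\bar s_j\, e^{-i\xi\bar x_j}\mathbf{1}_{\xi>0},$
hence $|\widehat{f_j}(\xi)|^2 = 4\pi^2 |s_j|^2 e^{-2Im(x_j)|\xi|}$. Integrating $|\xi|$ against this yields a clean diagonal contribution of order $|s_j|^2/Im(x_j)^2$. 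For $j\neq k$, a short calculation gives
$\Bigl|\tfrac{1}{2\pi}\int|\xi|\,\widehat{f_j}(\xi)\cdot\overline{\widehat{f_k}(\xi)}\,d\xi\Bigr| \leq \frac{C\,|s_j|\,|s_k|}{|x_k-\bar x_j|^2}.$
Since $|x_k-\bar x_j|^2\geq (Re(x_j)-Re(x_k))^2$, the Matsuno expansion (\ref{Mats}) combined with $v_j\neq v_k$ yields $|Re(x_j)-Re(x_k)|\geq c\,|v_j-v_k|\,t$ for $t$ large, and Lemma \ref{lemma1} bounds every $|s_j|$ uniformly, so each cross term is $O(1/t^2)$.

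Assembling these ingredients, energy conservation forces
$E_0 = \sum_{j=1}^N c_0\,\frac{|s_j|^2}{Im(x_j)^2} + o(1),$
for some positive constant $c_0$. Assumption (3) of Theorem \ref{majeur} gives $Im(x_j)\geq C>0$ for every $j\neq 1$, while Lemma \ref{lemma1} bounds $|s_j|$, so every term with $j\neq 1$ is uniformly $O(1)$. Consequently $|s_1|^2/Im(x_1)^2$ must stay bounded, i.e.\ $|s_1|\leq C'\,Im(x_1)$, and letting $t\to\infty$ yields $|s_1|\to 0$. The main obstacle I anticipate is the cross--term bookkeeping: one has to verify that the interaction between $f_1$, whose decay exponent $Im(x_1)$ degenerates, and the other $f_k$ does not produce a compensating $1/Im(x_1)$ factor. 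This is avoided by using the sharper denominator $(Re(x_j)-Re(x_k))^2$, which is controlled by the non--degenerate horizontal separation of the poles alone.
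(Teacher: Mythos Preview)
Your argument is correct and in fact cleaner than the route the paper takes at this point. The paper's proof of this lemma works entirely in physical space: it represents $\|m\|_{\dot H^{1/2}}^2$ via the convolution formula $\int_x\big|\int_y \partial_x m(y)\,|x-y|^{-1/2}\,dy\big|^2\,dx$, splits the inner integral into per--pole contributions $A_j$, and then establishes a pointwise lower bound $|A_1(X)|\gtrsim |Re(s_1)|\,X/Im(x_1)^2$ on an interval of length $Im(x_1)$ around $Re(x_1)$, together with uniform $O(1)$ upper bounds on the remaining $A_k$. Integrating in $X$ then forces $\|m\|_{\dot H^{1/2}}^2\to\infty$ unless $|Re(s_1)|\to 0$, and the constraint $|Re(s_1)|=|Im(s_1)|$ (from $s_1\cdot s_1=0$) finishes.

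Your Fourier computation short--circuits all of this: it lands directly on the closed algebraic form $\|m-m_0\|_{\dot H^{1/2}}^2 = c\sum_{j,k} s_j\cdot\bar s_k/(x_j-\bar x_k)^2$, which is precisely the Matsuno identity the paper itself invokes later, in Section~4.2, for the finite--time analogue of this lemma. Your diagonal/off--diagonal split, with $|x_k-\bar x_j|\geq |Re(x_k)-Re(x_j)|\sim |v_k-v_j|\,t$, is exactly what the paper uses there, and also what underlies the bounds (\ref{Aprime})--(\ref{Bprime}) in the proof of Lemma~\ref{lemma1}. You get the quantitative estimate $|s_1|\leq C\,Im(x_1)$, which is slightly stronger than the bare conclusion $s_1\to 0$ and feeds into the Gr\"onwall contradiction in the proof of Theorem~\ref{majeur} just the same. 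The one point that deserves a sentence of justification is that the $N$--pole asymptotics $Re(x_j)-Re(x_k)\sim(v_j-v_k)t$ you invoke is only stated in (\ref{Mats}) for $N=2$; the paper uses it for general $N$ throughout Section~3 without further comment, so you are at the same level of rigor, but a reference to \cite{matsuno2022integrability} for the general case would be appropriate.
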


\begin{proof}
	
	In this section, we will use the fact that the imaginary parts of the poles are bounded.
	Indeed, we have
	
	\begin{equation}\label{untrucconstant}
		\sum_{i} \dot x_i = \sum_i v_i \in \mathbb{R}.
	\end{equation}
	
	we obtain from (\ref{untrucconstant}) that 
	
	\begin{equation}\label{untrucconstant2}
		\sum_{i} Im(x_i) = cste.
	\end{equation}
	
	Now, (\ref{untrucconstant2}) together with the fact that $Im(x_i)\geq 0$ for all $i$ yields 
	
	\begin{equation}
		|Im(x_i)|\leq \sum_i Im(x_i(0)).
	\end{equation}

	Now, denoting $\mathcal{F}$ the Fourier transform, we provide the following expressions.

	\begin{multline}\label{four1}
		||m(\cdot,t)||_{H^{1/2}}^2 = || |\xi| \cdot \mathcal{F}(m(\cdot,t)) ||_{L^2(\mathbb{R})}^2 = < |\xi|^{1/2} \mathcal{F}(m), |\xi|^{1/2} \mathcal{F}(m)>_{L^2(\mathbb{R})} \\= < |\xi| \mathcal{F}(m), \mathcal{F}(m)>_{L^2(\mathbb{R})} = <|\nabla| m(\cdot,t),m(\cdot,t)>_{L^2(\mathbb{R})} = \int_{x=-\infty}^{+\infty} |\nabla| m(x,t) \cdot m(x,t) dx
	\end{multline}
	
	Another way to express this norm is

	\begin{multline}\label{H12}
		||m(\cdot,t)||_{H^{1/2}}^2 = \int_{x=-\infty}^{+\infty} \left| \int_{y=-\infty}^{+\infty} \frac{\partial m}{\partial x}(y,t) \cdot \frac{1}{\sqrt{|x-y|}}  dy \right|^2 dx\\
	=\int_{x=-\infty}^{+\infty} \left[  \int_{y=-\infty}^{+\infty} \frac{\partial m}{\partial x}(y,t)\cdot \frac{1}{\sqrt{|x-y|}}  dy \cdot \int_{z=-\infty}^{+\infty} \frac{\partial m}{\partial x}(z,t)\cdot \frac{1}{\sqrt{|x-z|}} dz \right] dx.
	\end{multline}	

	We will now assume that $Im(x_1) \rightarrow 0$ as $t\rightarrow \tilde{t}$, where $\tilde{t} \in (0,\infty]$.
	
	We recall that $m$ is a sum of solitons, hence 

	\begin{multline}\label{H122}
		||m(\cdot,t)||_{H^{1/2}}^2
		\\= \int_{x=-\infty}^{+\infty} \left| \int_{y=-\infty}^{+\infty} \frac{\partial }{\partial x} \left( m_0 + \sum_{j=1}^N \left( i 	\frac{s_j}{y-x_j} - i \frac{\bar s_j}{y-\bar x_j} \right) \right) \cdot \frac{1}{\sqrt{|x-y|}}  dy \right|^2 dx.\\
	\end{multline}	
	
	We define the $N+1$ following terms ($A_1$ is defined separately because it will not have a symmetric role)
	
	\begin{equation}
		\begin{aligned}
			A_0(x)&=\int_{y=-\infty}^{+\infty} \frac{\partial }{\partial x}m_0(y,t) \cdot \frac{1}{\sqrt{|y-x|}}dy = 0,\\
			A_1(x)&=\int_{y=-\infty}^{+\infty} \frac{\partial }{\partial x} \left( i \frac{s_1}{x-x_1} - i \frac{\bar s_1}{x-\bar x_1} \right)(y,t) \cdot \frac{1}{\sqrt{|y-x|}}dy,\\
			A_j(x)&=\int_{y=-\infty}^{+\infty} \frac{\partial }{\partial x} \left( i \frac{s_j}{x-x_j} - i \frac{\bar s_j}{x-\bar x_j} \right)(y,t) \cdot \frac{1}{\sqrt{|y-x|}}dy,~ j\neq 1.
		\end{aligned}
	\end{equation}
	
	Our goal will be to show that $A_1$ is much bigger than $A_j$, $j\neq 1$ when we are getting closer to $\tilde{t}$, except if $s_1\rightarrow 0$. We look at $A_1$ first. If $s_1\rightarrow l=(l_1,l_2) \in \mathbb{C} $ where $l\neq 0$, then there exists $t_0$ sufficiently close to $\tilde{t}$ such that $s_1\in [l_1/2,2\cdot l_1] \times [l_2/2,2\cdot l_2]$. If $|l| \rightarrow \infty$, we can chose this $t_0$ such that $|l|>M$. Also, if $|l| \rightarrow \infty$, since $s_1\cdot s_1=0$, we can chose $t_0$ such that we have $|l_1|>M$ and $|l_2|>M$. we will make a slight abuse of notation and always write $s_1\in [l_1/2,2\cdot l_1] \times [l_2/2,2\cdot l_2]$. 

	We compute and obtain

	\begin{multline}\label{derivlol}
	\frac{\partial }{\partial x} \left( i \frac{s_1}{x-x_1} - i \frac{\bar s_1}{x-\bar x_1} \right) 
	= i \frac{2 \bar s_1}{ (x-\bar x_1)^2 } - i \frac{s_1}{(x-x_1)^2} = 2 \cdot Re \left( i \frac{s_1}{\left( x-x_1\right)^2}  \right)\\
	= 2 \cdot Re\left( i \frac{Re(s_1) + i Im(s_1)}{[(x-Re(x_1))^2-Im(x_1)^2] - 2i(x-Re(x_1)) Im(s_1)} \right)\\
	= \frac{-2 Im(s_1) [(x-Re(x_1))^2 - Im(x_1)^2] -4 Re(s_1)(x-Re(x_1))Im(x_1)}{[(x-Re(x_1))^2 - Im(x_1)^2]^2 + 4 (x-Re(x_1))^2 Im(x_1)^2 }\\
		= \frac{-2 Im(s_1) [(x-Re(x_1))^2 - Im(x_1)^2] -4 Re(s_1)(x-Re(x_1))Im(x_1)}{[(x-Re(x_1))^2 + Im(x_1)^2]^2 }\\
	\end{multline}
	
	We recall that we have $s_1^2=0$, which means
	\begin{multline}
		\langle Re(s_1) + i Im(s_1) , Re(s_1)+ i Im(s_1) \rangle \\
		= |Re(s_1)|^2 - |Im(s_1)|^2 + 2 i \langle Re(s_1),Im(s_1) \rangle =0.
	\end{multline}
	
	In particular, we obtain
	
	\begin{equation}
		|Re(s_1)|^2 = |Im(s_1)|^2,~ \langle Re(s_1),Im(s_1) \rangle =0.
	\end{equation}
	
	Hence, for $a,b \in \mathbb{R}$, we have
	
	\begin{equation}
		\left| a Re(s_1) + b Im(s_1) \right| \geq |a| \cdot |Re(s_1)|.
	\end{equation}
	
	Using this property, we obtain,
	
	\begin{multline}
		|A_1(x)| = \left| \int_{y=-\infty}^{+\infty} \frac{\partial }{\partial x} \left( i \frac{s_1}{x-x_1} - i \frac{\bar s_1}{x-\bar x_1} \right)(y,t) \cdot \frac{1}{\sqrt{|y-x|}}dy \right| \\
		= \bigg| Im(s_1) \int_{y=-\infty}^{+\infty} \frac{-2[(y-Re(x_1))^2 - Im(x_1)^2]}{[(y-Re(x_1))^2 + Im(x_1)^2]^2 } \frac{1}{\sqrt{|y-x|}} dy \\
		+ Re(s_1) \int_{y=-\infty}^{+\infty} \frac{-4(y-Re(x_1))Im(x_1)}{[(y-Re(x_1))^2 + Im(x_1)^2]^2 } \frac{1}{\sqrt{|y-x|}} dy \bigg|\\
		\geq \left| \int_{y=-\infty}^{+\infty} \frac{-4(y-Re(x_1))Im(x_1)}{[(y-Re(x_1))^2 + Im(x_1)^2]^2 } \frac{1}{\sqrt{|y-x|}} dy \right| |Re(s_1)|
	\end{multline}
	
	Now, we assume that $x > Re(x_1)$. We will detail later why it is enough. We denote $X = x - Re(x_1)$, and $Y = y - Re(x_1)$. We have
	\begin{equation}
		|A_1(X)| \geq \left| \int_{Y=-\infty}^{+\infty} \frac{-4YIm(x_1)}{[Y^2 + Im(x_1)^2]^2 } \frac{1}{\sqrt{|Y-X|}} dy \right| |Re(s_1)|
	\end{equation}
	
	We study the inner integral. For $X>0$, we have
	
	\begin{multline}
		\int_{Y=-\infty}^{+\infty} \frac{4Y|Im(x_1)|}{[Y^2 + Im(x_1)^2]^2 } \frac{1}{\sqrt{|Y-X|}} dy \\
		=\int_{Y=0}^{+\infty} \frac{4Y|Im(x_1)|}{[Y^2 + Im(x_1)^2]^2 } \left( \frac{1}{\sqrt{|Y-X|}} -\frac{1}{\sqrt{Y+X}} \right) dy \\
		\geq \int_{Y=0}^{X} \frac{4Y|Im(x_1)|}{[Y^2 + Im(x_1)^2]^2 } \left( \frac{Y}{\sqrt{X^2-Y^2} \left( 2 \sqrt{X+Y} \right)} \right) dy \\
		+ \int_{Y=X}^{\infty} \frac{4Y|Im(x_1)|}{[Y^2 + Im(x_1)^2]^2 } \left( \frac{X}{\sqrt{Y^2-X^2} \left( 2 \sqrt{X+Y} \right)} \right) dy = C+D.
	\end{multline}
	
	Now, we provide a lower bound for $C$. We consider $X \in (0,|Im(x_1)|)$,
	
	\begin{multline}
		D \geq \int_{Y=X}^\infty \frac{4 Y |Im(x_1)|}{[Y^2 + Im(x_1)^2]^2 } \left( \frac{X}{\sqrt{Y^2} \left( 4 \sqrt{Y} \right)} \right) dy \\
		\geq \int_{Y=X}^\infty \frac{4 |Im(x_1)|}{[Y^2 + Im(x_1)^2]^2 } \frac{X}{ 4 \sqrt{Y} }  dy \geq \int_{Y=X}^\infty \frac{C X |Im(x_1)|}{[Y^2 + Im(x_1)^2]^{2+1/4} } dy \\
		\geq C X |Im(x_1)| \int_{Y=|Im(x_1)|}^\infty \frac{1}{[Y^2 + Im(x_1)^2]^{2} } dy \geq \frac{C X}{|Im(x_1)|^2}.
	\end{multline}
	
	In particular, we obtain that for $x \in [0,|Im(x_1)|]$, we have
	
	\begin{equation}
		|A_1(X)| \geq |Re(s_1)| \frac{CX}{|Im(x_1)|^2}.
	\end{equation}
	
	Now, we find an upper bound for the other terms, i.e. $A_k$, for $k\neq 1$.
	
	We have
	
	\begin{equation}
		\left| 2 Re\left( i \frac{s_k}{(y-x_k)^2} \right) \right| \leq 2 \frac{|s_k|}{(y-Re(x_2))^2+Im(x_2)^2},
	\end{equation}
	
	and consequently
	
	\begin{multline}
		|A_k(x)| \leq \int_{y=-\infty}^\infty \frac{C }{(y-Re(x_2))^2 + Im(x_2)^2} \frac{1}{\sqrt{|x-y|}} dy \\
		\leq \int_{|y-x|\leq 1} \frac{C}{(y-Re(x_2))^2 + Im(x_2)^2} \frac{dy}{\sqrt{|x-y|}} + \int_{|y-x|\geq 1} \frac{C dy}{(y-Re(x_2))^2 + Im(x_2)^2} \\
		\leq Cst.
	\end{multline}
	
	Now, we can finally estimate $||m||_{1/2}^2$. We have,
	
	\begin{multline}
		||m||_{1/2}^2 \geq \int_{X=0}^{|Im(x_1)|} \left( \sum_{k=1}^N A_k(X) \right)^2 dx \\
		\geq \int_{X=0}^{|Im(x_1)|} \left[ A_1(X)^2 - |A_1(X)| \sum_{k=2}^N |A_k(X)| - \sum_{j,k\neq 1} |A_j(X)A_k(X)| \right] dX \\
		\geq \int_{X=0}^{|Im(x_1)|} A_1(X)^2 dX - C \int_{X=0}^{|Im(x_1)|} |A_1(X)| dX - D |Im(x_1)|.
	\end{multline}
	
	Since we have,
	
	\begin{equation}
		\int_{X=0}^{|Im(x_1)|} |Re(s_1)|^2 \frac{X^2}{|Im(x_1)|^4} dX \geq \frac{C}{|Im(x_1)|} |Re(s_1)|^2 \rightarrow \infty,
	\end{equation}
	
	and 
	
	\begin{multline}
		\int_{X=0}^{|Im(x_1)|} |A_1(X)| dX \leq \left( \int_{X=0}^{|Im(x_1)|} 1 dX \right)^{1/2} \left( \int_{X=0}^{|Im(x_1)|} |A_1(X)|^2 dX \right)^{1/2} \\
		\leq |Im(x_1)|^{1/2} \left( \int_{X=0}^{|Im(x_1)|} |A_1(X)|^2 dX \right)^{1/2} << \int_{X=0}^{|Im(x_1)|} |A_1(X)|^2 dX.
	\end{multline}
	
	Finally, we obtain if we assume $|Re(s_1)|$ does not converge to $0$ (up to a subsequence)
	
	\begin{equation}
		||m||_{1/2}^2 \rightarrow \infty.
	\end{equation}
	
	This is of course in contradiction with the conservation of $||m||_{1/2}$. Hence, for any $\varepsilon$, there exists no sequence in time $t_k$ such that $|Re(x_1)|\geq \varepsilon$ and $t_k\rightarrow \infty$. This leads to the fact that $|Re(s_1)| \rightarrow 0$. Since $|Re(s_1)|=|Im(s_1)|$, we obtain $s_1 \rightarrow 0$.
	
	\end{proof}
	
We are now able to make the proof of our main theorem.

\begin{proof}(of theorem \ref{majeur})	
	
	Now, we consider the time evolution of $s_j$ provided by

	\begin{equation}
	\dot s_j = -2 \sum_{k\neq j} \frac{s_j(t) \times s_k(t)}{ (x_j(t)-x_k(t))^2 }.
	\end{equation}

	In particular, with $j=1$, we obtain using the previous estimates that (using again the fact that $Im(x_j)$ is bounded)

	\begin{equation}
	\left| \dot s_1 \right| \leq C \cdot \frac{|s_1|}{t^2} .
	\end{equation}

	Using the same argument as previously, this is in contradiction with the fact that $s_1 \rightarrow 0$ as $t\rightarrow \infty$, and $s_1 \neq 0$ for all $t< \infty$.

\end{proof}

 \begin{remark}
     The same property holds if we assume that several poles are causing the blow-up. This can be shown by using the techniques that we present in the next section, the main argument remains the same. We briefly show at the end how to use what we have done to deduce the well-posedness in the general setup. 
 \end{remark}

\section{Absence of finite time blow-up for N solitons with separated poles}\label{sec:N,sep}

In this section, we show the absence of a finite time blow-up phenomenon under the assumptions that there exists no couple of indices $i$ and $j$ with $i\neq j$ such that $x_i-x_j\rightarrow 0$ as $t\rightarrow 0$. Up to a subsequence, we can then assume that there exists $c>0$ such that for any $t$ close enough to $t_0$, $|x_i-x_j|\geq c$ for $i\neq j$. 

We now state the main result of this section. 

\begin{theorem}
	
	Let $m:\mathbb{R}\times \mathbb{R} \rightarrow S\subset \mathbb{R}^3$ be a solution of the Cauchy problem
 	
	\begin{equation}
	\left\{
	\begin{aligned}
	&m_t(x,t) = m(x,t) \times |\nabla| m(x,t),~ ~ \text{ for all } (x,t) \in \mathbb{R}^2, \\
	&m(x,0)= m_0 +i \sum_{j=1}^N \frac{s_j^0}{x-x_j^0} - i \sum_{j=1}^N \frac{\bar s_j^0}{x-\bar x_j^0}, ~ ~ \text{ for all } x \in \mathbb{R}.
	\end{aligned}
	\right.
	\end{equation}
	
	We know that $m$ is of the form 
	\begin{equation}
	m(x,t)= m_0 +i \sum_{j=1}^N \frac{s_j(t)}{x-x_j(t)} - i \sum_{j=1}^N \frac{\bar s_j(t)}{x-\bar x_j(t)}.
	\end{equation}
Under the previously mentioned assumptions, there exists no $i$ and $t_0$ such that $Im(x_{i})\rightarrow 0$ as $t\rightarrow t_0$, even up to a subsequence.
\end{theorem}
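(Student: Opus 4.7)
My plan is to mirror the three-step scheme of Theorem~\ref{majeur} --- boundedness of the spins, then $s_{i_0}\to 0$ via conservation of the $H^{1/2}$ norm, then contradiction via the spin ODE --- but with the long-time decay factor $1/(|v_i-v_j|^2 t^2)$ replaced everywhere by the uniform bound $1/c^2$ coming from the hypothesis $|x_i-x_j|\geq c$. Relabel so that $Im(x_1)(t)\to 0$ as $t\to t_0$ along some subsequence; initially I assume the remaining $Im(x_k)$ stay bounded below, and return to the general case at the end.

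For the first step I rerun the proof of Lemma~\ref{lemma1}: apply the condition \eqref{conditions} at $x=Re(x_{i_0})$, isolate the diagonal contribution $\tfrac12 |s_{i_0}|^2/|Im(x_{i_0})|$, and bound each off-diagonal difference by
\[
\left|\frac{s_j}{x_{i_0}-x_j}-\frac{s_j}{Re(x_{i_0})-x_j}\right|\leq \frac{|Im(x_{i_0})|\,|s_j|}{c^2}.
\]
Since $\sum_i Im(x_i)$ is conserved and each imaginary part is non-negative, $|Im(x_{i_0})|$ is uniformly bounded, and taking the supremum over $i_0$ in the resulting inequality yields $|s_j(t)|\leq C$ uniformly in $t$. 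For the second step I reuse the pointwise lower bound $|A_1(X)|\geq |Re(s_1)|\cdot CX/|Im(x_1)|^2$ on $X\in(0,|Im(x_1)|)$, which is purely local in $x_1,s_1$ and is therefore insensitive to whether $t_0$ is finite or infinite. The bounds $|A_k|\leq C$ for $k\neq 1$ now follow from $|x-x_k|\geq c/2$ on a neighbourhood of $Re(x_1)$, because pole separation is assumed at every time, not only in the large-time limit. Conservation of $\|m\|_{H^{1/2}}$ then forces $|Re(s_1)|\to 0$, and the relation $s_1\cdot s_1=0$ gives $|Re(s_1)|=|Im(s_1)|$, hence $s_1\to 0$.

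For the third step, the evolution equation \eqref{evolution}(i) and the separation estimate give
\[
\bigl|\dot{|s_1|}\bigr|\leq |\dot s_1|\leq \frac{2}{c^2}\sum_{k\neq 1}|s_1|\,|s_k|\leq \frac{C}{c^2}|s_1|,
\]
so $\bigl|\tfrac{d}{dt}\ln|s_1|\bigr|\leq C/c^2$. Integrating from any fixed $t^\star<t_0$ shows that $\bigl|\ln|s_1|(t)-\ln|s_1|(t^\star)\bigr|\leq (C/c^2)(t_0-t^\star)$, which stays bounded as $t\to t_0$, while $\ln|s_1|(t)\to -\infty$ from the previous step --- the desired contradiction. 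Note the structural difference with the infinite-time version: there the bound $\int^\infty dt/t^2<\infty$ was what kept the right-hand side finite, whereas here it is finiteness of $t_0$ together with the separation constant that plays this role.

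The main obstacle is the case in which several poles $x_{i_1},\dots,x_{i_r}$ approach the real axis simultaneously at $t_0$, since the $H^{1/2}$ lower bound above relied on $A_1$ being the only $A_k$ whose amplitude $|Im(x_k)|^{-2}$ blows up. The remedy I would pursue is to extract a subsequence along which one index $i_0$ minimises $Im(x_{i_0})$, and localise the integration in $x$ to a window of size $\sim Im(x_{i_0})$ around $Re(x_{i_0})$. The separation $|x_{i_0}-x_k|\geq c$ then forces $|Re(x_{i_0})-Re(x_k)|\geq c/2$ on this window whenever both $Im(x_{i_0})$ and $Im(x_k)$ are small, so the other $A_k$ are $O(|Im(x_{i_0})|/c^2)$ uniformly on the window and are dominated by $A_{i_0}$; the remainder of the argument then goes through verbatim with $1$ replaced by $i_0$.
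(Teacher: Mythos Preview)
Your three-step outline is the right shape, and Steps~2 and~3 are essentially what the paper does (the paper in fact uses a slightly different form of the $H^{1/2}$ estimate in Lemma~\ref{encoreencoreun2}, but your localisation argument would also work once the surrounding pieces are in place). The real issue is Step~1.

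The supremum argument you borrow from Lemma~\ref{lemma1} does \emph{not} close in the finite-time setting. With the long-time factor $1/(|v_{i_0}-v_j|^2 t^2)$ replaced by $1/c^2$, the inequality you obtain for each $i_0$ reads
\[
\frac{|s_{i_0}|}{2\,|Im(x_{i_0})|}\;\leq\;1+\frac{C\,|Im(x_{i_0})|}{c^{2}}\,\max_k|s_k|.
\]
Letting $M=\max_k|s_k|$ and $I=\sum_k Im(x_k(0))$ (the uniform bound on the imaginary parts), the supremum over $i_0$ gives only $M\bigl(1-\tfrac{2CI^{2}}{c^{2}}\bigr)\leq 2I$, which says nothing once $c^{2}\leq 2CI^{2}$. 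In Section~3 the argument works precisely because the off-diagonal coefficient carries the extra factor $1/t^{2}$ and is therefore small for $t$ large; here that smallness is absent, and there is no hypothesis relating $c$ to $I$. So you have not established that the $s_j$ are bounded, and since your bounds $|A_k|\leq C$ in Step~2 and $|\dot s_1|\leq (C/c^2)|s_1|$ in Step~3 both rest on this, the whole chain breaks.

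The paper's route around this obstacle is genuinely different: it evaluates $m(\cdot,t)$ at $2N$ real points to obtain a linear system $AS=P$ with $A$ a Cauchy matrix and $|P|$ bounded. Explicitly inverting $A$ (the inverse entries are known products of differences) yields, not uniform boundedness, but the weaker estimate $|s_j|\leq C/\min_i|Im(x_i)|$; see Lemma~\ref{encoreencoreun}. This weaker bound is just enough for the energy step, because in the expression $\|m\|_{1/2}^{2}=2\pi\sum_{j,k}\tfrac{s_j\cdot\bar s_k}{(x_j-\bar x_k)^2}$ the diagonal terms $\tfrac{|s_j|^2}{4\,Im(x_j)^2}$ still dominate the cross terms by one power of $|Im(x_j)|^{-1}$. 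Only \emph{after} the energy argument has forced $s_j\to 0$ for the poles approaching the real axis does the paper go back and obtain the uniform bound $|s_j|\leq C$ for the remaining spins, by deleting the now-controlled columns from the Cauchy system (Lemma~\ref{encoreencoreun3}). Your Step~1 is trying to shortcut this two-pass structure, and that shortcut is exactly what fails.
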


\subsection{Case where only one pole is going toward the real axis, proof with an analytic expression for the energy norm}

We first look at the case where only one pole is going toward the real axis, the general case will be done in the next subsection.

We will now define two subsets of $\{1,\cdots,N \}$. We denote by $I$ the set of indices such that $j\in I$ implies that $s_j$ is not bounded near $t_0$. We also denote by $J$ the set of indices such that $j \in J$ implies that there exists a subsequence in time such that $Im(x_j)\rightarrow 0.$

Our proof will be by contradiction and divided into the following three lemmas. We assume that $J$ is not empty.

\begin{lemma}\label{encoreencoreun}
	For any $1 \leq i \leq N$, we have that $|s_i|\leq \frac{C}{|Im(x_{i_1})|}$.
\end{lemma}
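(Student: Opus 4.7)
The plan is to mirror the argument of Lemma \ref{lemma1}, substituting the uniform separation hypothesis $|x_i - x_j|\geq c$ for the quadratic-in-time separation used there. I will start from the condition (\ref{conditions}) taken at $j=i_0$, take the dot product with $s_{i_0}$, and use the same manipulation as in the derivation of (\ref{Dprime}) to isolate the self-interaction term $|s_{i_0}|^2/(2\,Im(x_{i_0}))$ on one side. Because $|x_{i_0}-x_k|\geq c$ for every $k\neq i_0$, and (as I discuss in the obstacle paragraph) $|x_{i_0}-\bar x_k|$ is also bounded below uniformly, the two "difference" estimates (\ref{Aprime}) and (\ref{Bprime}) become
\begin{equation}
\left|\frac{s_k}{x_{i_0}-x_k} - \frac{s_k}{Re(x_{i_0})-x_k}\right| \leq \frac{|Im(x_{i_0})|\cdot|s_k|}{c^2},
\end{equation}
and analogously for the conjugate term. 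Combined with $|s_{i_0}\cdot i\,m(Re(x_{i_0}))|\leq |s_{i_0}|$ (which follows from $|m|=1$), I arrive at
\begin{equation}
\frac{|s_{i_0}|^2}{|Im(x_{i_0})|} \leq 2|s_{i_0}| + C\,|Im(x_{i_0})|\cdot|s_{i_0}|\cdot \sum_{k\neq i_0}|s_k|,
\end{equation}
or equivalently, after dividing by $|s_{i_0}|$,
\begin{equation}
|s_{i_0}| \leq 2\,|Im(x_{i_0})| + C\,|Im(x_{i_0})|^2 \sum_{k\neq i_0}|s_k|.
\end{equation}

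From here I will split cases according to whether $i_0=i_1$ (the unique index whose pole approaches the real axis, in this subsection) or $i_0\neq i_1$. For $i_0\neq i_1$ the quantity $|Im(x_{i_0})|$ is bounded both above and below, so the inequality reduces to a linear estimate $|s_{i_0}|\leq C(1+\sum_{k}|s_k|)$. For $i_0=i_1$, the same inequality carries an extra small factor $|Im(x_{i_1})|^2$ in front of the sum. Setting $M=\max_k|s_k|$ and taking the maximum over $i_0\neq i_1$, then substituting the bound for $|s_{i_1}|$, I expect a self-consistent relation of the shape $M\leq C + C\,|Im(x_{i_1})|\cdot M$, which closes for $t$ close enough to $t_0$, i.e. once $|Im(x_{i_1})|$ is small enough that $C\,|Im(x_{i_1})|<1$. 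The resulting uniform bound $M\leq C$ then trivially implies the stated (weaker) estimate $|s_i|\leq C/|Im(x_{i_1})|$.

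The main technical obstacle I anticipate is securing a uniform lower bound on $|x_{i_0}-\bar x_k|$ for $k\neq i_0$. The separation hypothesis only provides $|x_{i_0}-x_k|\geq c$, and $|x_{i_0}-\bar x_k|$ could a priori be smaller if $x_{i_0}$ happens to lie near $\bar x_k$. Since all poles sit in the closed upper half-plane, however, $|x_{i_0}-\bar x_k|\geq |Im(x_{i_0})+Im(x_k)|$, so the only potentially problematic pairing is $k=i_1$ with $i_0\neq i_1$. In that case I will combine the separation $|x_{i_0}-x_{i_1}|\geq c$ with the uniform boundedness of the imaginary parts (which follows, as in Lemma \ref{lemma1}, from the conservation of $\sum_j \dot x_j$) to extract a positive lower bound on $|Re(x_{i_0})-Re(x_{i_1})|$, and hence on $|x_{i_0}-\bar x_{i_1}|$, possibly after replacing $c$ by a smaller constant $c'>0$.
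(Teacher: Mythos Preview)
Your approach is genuinely different from the paper's: the paper does not adapt the Lemma~\ref{lemma1} argument at all, but instead evaluates the rational expression~(\ref{expressionSoln}) at $2N$ carefully chosen real points $z_1,\dots,z_{2N}$, obtains a $2N\times 2N$ linear system of Cauchy type $MS=P$ with $|P|$ bounded, and then explicitly bounds the entries of the inverse Cauchy matrix. This is not a perturbative argument, and that is the essential point.

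Your proposal has a real gap precisely there. In Lemma~\ref{lemma1} the cross terms carry a factor $1/t^2$, and it is this smallness that lets the self-consistency close: taking $i_0$ to realise $M=\max_k|s_k|$ gives $M\big(1/|Im(x_{i_0})| - C/t^2\big)\le 1$, which for large $t$ yields $M\le C$. In your adaptation the factor $1/t^2$ is replaced by $1/c^2$, which is a fixed constant. If the maximum $M$ is attained at some $i_0^*\neq i_1$, your inequality reads $|s_{i_0^*}|\le C + C'(N-1)M$ with $C'$ of order $|Im(x_{i_0^*})|^2/c^2$; since $|Im(x_{i_0^*})|$ is bounded \emph{below} away from zero, $C'(N-1)$ need not be $<1$, and the inequality $M\le C+C'(N-1)M$ gives nothing. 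Your claimed relation $M\le C + C|Im(x_{i_1})|M$ would require the cross-term coefficient to be small for \emph{every} $i_0$, but it is only small when $i_0=i_1$. This is why the paper abandons the constraint-based argument here and passes to an exact linear-algebra inversion instead.

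A secondary issue: your last paragraph's fix for the lower bound on $|Re(x_{i_0})-Re(x_{i_1})|$ does not work. From $|x_{i_0}-x_{i_1}|\ge c$ and boundedness of the imaginary parts one cannot extract any positive lower bound on the real-part difference (take $x_{i_0}=ci$, $x_{i_1}=\varepsilon i$). In the paper this is a non-issue because Section~4 works under Assumption~\circled{C}, i.e.\ $|Re(x_j)-Re(x_k)|\ge\eta$ directly; if you only assume $|x_j-x_k|\ge c$, the denominators $|Re(x_{i_0})-x_{i_1}|$ and $|Re(x_{i_0})-\bar x_{i_1}|$ can genuinely degenerate.
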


\begin{lemma}\label{encoreencoreun2}
For any $j\in J$, we have that $|s_j|\rightarrow 0$ up to a subsequence.
\end{lemma}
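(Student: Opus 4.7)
The plan is to argue by contradiction using the conservation of the $H^{1/2}$ energy, in the same spirit as the proof of the corresponding lemma in the previous section. Fix $j_0 \in J$, and suppose for contradiction that along the subsequence $t_n$ on which $\text{Im}(x_{j_0}(t_n)) \to 0$ the spin $|s_{j_0}(t_n)|$ does \emph{not} tend to $0$; extracting further, assume $|s_{j_0}(t_n)| \geq \varepsilon > 0$. Since the algebraic constraint $s_{j_0}^2 = 0$ is preserved in time, this forces $|\text{Re}(s_{j_0})| = |\text{Im}(s_{j_0})| = |s_{j_0}|/\sqrt{2} \geq \varepsilon/\sqrt{2}$ along the subsequence, so it suffices to find a time-increasing lower bound on $\|m\|_{H^{1/2}}^2$ that eventually exceeds its (conserved) initial value.

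Using the decomposition $\|m\|_{H^{1/2}}^2 = \int |\sum_k A_k(x)|^2 dx$ established in the previous section, together with the pointwise lower bound already computed there, one has for $X := x - \text{Re}(x_{j_0}) \in (0, |\text{Im}(x_{j_0})|)$ the estimate
\begin{equation*}
|A_{j_0}(x)| \geq C\, |\text{Re}(s_{j_0})|\, \frac{X}{|\text{Im}(x_{j_0})|^2},
\end{equation*}
so that $\int_0^{|\text{Im}(x_{j_0})|} A_{j_0}(x)^2\, dx \geq C\varepsilon^2 / |\text{Im}(x_{j_0})| \to \infty$. The remaining work is to show that the contribution of the other $A_k$, $k \neq j_0$, on the same interval $I_{j_0} := (\text{Re}(x_{j_0}), \text{Re}(x_{j_0}) + |\text{Im}(x_{j_0})|)$ cannot cancel this divergence. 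Here the separation hypothesis $|x_i - x_j| \geq c$ is the crucial input: for $x \in I_{j_0}$, once $|\text{Im}(x_{j_0})| < c/4$, one has $|x - \text{Re}(x_k)| \geq c/2$ for every $k \neq j_0$, so the integrand defining $A_k(x)$ has no singular concentration near $y=x$. Splitting the $y$-integral between $|y-x| \leq c/4$ and $|y-x| > c/4$, using that $|y-x_k| \geq c/4$ on the first piece and that $1/\sqrt{|x-y|}$ is bounded by a constant on the second, and appealing to the standard Cauchy integral $\int dy / ((y-\text{Re}(x_k))^2 + \text{Im}(x_k)^2) = \pi/|\text{Im}(x_k)|$, gives $|A_k(x)| \leq C |s_k|/|\text{Im}(x_k)|$. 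Combined with Lemma \ref{encoreencoreun}, this produces a bound on $\|\sum_{k \neq j_0} A_k\|_{L^2(I_{j_0})}$ which, after integration over the interval of length $|\text{Im}(x_{j_0})|$, remains of strictly smaller order than $\|A_{j_0}\|_{L^2(I_{j_0})}^2 \sim 1/|\text{Im}(x_{j_0})|$.

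Finally, expanding $(\sum_k A_k)^2 \geq A_{j_0}^2 - 2|A_{j_0}|\,|\sum_{k\neq j_0} A_k| - (\sum_{k\neq j_0} A_k)^2$ and applying Cauchy--Schwarz yields
\begin{equation*}
\|m\|_{H^{1/2}}^2 \geq \|A_{j_0}\|_{L^2(I_{j_0})}^2 - 2\|A_{j_0}\|_{L^2(I_{j_0})}\left\|\textstyle\sum_{k\neq j_0} A_k\right\|_{L^2(I_{j_0})} - \left\|\textstyle\sum_{k\neq j_0} A_k\right\|_{L^2(I_{j_0})}^2,
\end{equation*}
and the right-hand side tends to $+\infty$ along the subsequence, contradicting the conservation of $\|m\|_{H^{1/2}}$. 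Hence $|\text{Re}(s_{j_0})| \to 0$, and via $s_{j_0}^2 = 0$ this gives $|s_{j_0}| \to 0$. I expect the main obstacle to be making rigorous the claim that the $A_k$ terms are strictly of lower order in $1/|\text{Im}(x_{j_0})|$ than $A_{j_0}$, especially when several other indices $k$ lie in $J$ and so $|\text{Im}(x_k)|$ is also small: the argument above only gives $|A_k(x)| \leq C|s_k|/|\text{Im}(x_k)|$, and combined with Lemma \ref{encoreencoreun} the naive power count is borderline, so a sharper estimate on $A_k$ using its smoothness away from $x_k$ (e.g.\ the fact that $A_k$ is a half-derivative of an analytic function on $I_{j_0}$) together with the quantitative separation of the real parts will be needed to close the argument.
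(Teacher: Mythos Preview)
Your overall strategy---contradict conservation of $\|m\|_{H^{1/2}}$ by showing the contribution of the $j_0$-soliton blows up and dominates the rest---is exactly the paper's. The genuine gap is the one you flagged yourself, but your proposed fix points in the wrong direction.

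You import the lower bound $|A_{j_0}(X)| \geq C\,|\mathrm{Re}(s_{j_0})|\, X/|\mathrm{Im}(x_{j_0})|^2$ from Section~3. That bound is not sharp: it vanishes at $X=0$ and yields only $\|A_{j_0}\|_{L^2(I_{j_0})}^2 \gtrsim |\mathrm{Im}(x_{j_0})|^{-1}$, which---as you compute---is the \emph{same} order as the crude upper bound on $\|\sum_{k\neq j_0} A_k\|_{L^2(I_{j_0})}^2$ coming from $|A_k|\leq C|s_k|$ and Lemma~\ref{encoreencoreun}. In Section~3 this crude lower bound sufficed because the other $A_k$ were bounded by an absolute constant (the poles were separating like $t$); here they are only $O(|\mathrm{Im}(x_{j_0})|^{-1})$, so the power count is exactly critical.

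The paper closes the gap by improving the \emph{lower} bound on $A_{j_0}$ rather than the upper bound on $A_k$. Writing $I_1(y)=\int \frac{1}{\sqrt{|x-y|}}\,2\,\mathrm{Re}\!\left(\frac{s_{j_0}}{(x-x_{j_0})^2}\right)dx$, the change of variables $x-\mathrm{Re}(x_{j_0})=|\mathrm{Im}(x_{j_0})|\,\omega$, $y-\mathrm{Re}(x_{j_0})=|\mathrm{Im}(x_{j_0})|\,v$ factors out $|\mathrm{Im}(x_{j_0})|^{-3/2}$ cleanly and reduces the coefficient of $\mathrm{Re}(s_{j_0})$ to a fixed continuous function $f(v)-g(v)$ with $f(0)-g(0)=\pi/\sqrt{2}\neq 0$. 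Hence $|I_1(y)|\geq C\,|\mathrm{Re}(s_{j_0})|\,|\mathrm{Im}(x_{j_0})|^{-3/2}$ uniformly for $|v|\leq\delta$, and consequently $\int_{I_{j_0}} I_1^2 \gtrsim |\mathrm{Im}(x_{j_0})|^{-2}$, strictly dominating the $|\mathrm{Im}(x_{j_0})|^{-1}$ contribution of the other poles.

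Finally, your concern about ``several other indices $k$ in $J$'' does not arise here: Lemma~\ref{encoreencoreun2} is stated in the subsection where only one pole approaches the real axis, so $|\mathrm{Im}(x_k)|\geq c$ for all $k\neq j_0$. The multi-pole case is handled separately in the next subsection via the algebraic expression $\|m\|_{1/2}=2\pi\sum_{j,k}\frac{s_j\cdot\bar s_k}{(x_j-\bar x_k)^2}$ rather than the integral representation.
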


\begin{lemma}\label{encoreencoreun3}
	For any $1 \leq i \leq N$, we in fact have $|s_i|\leq C$.
\end{lemma}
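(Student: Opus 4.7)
The strategy is to combine Lemma \ref{encoreencoreun2}, which gives $|s_{i_1}|\to 0$ along a subsequence, with a Cauchy-matrix inversion argument that refines the bound $|s_i|\leq C/|Im(x_{i_1})|$ of Lemma \ref{encoreencoreun} into a uniform one. Since $x_{i_1}$ is the only pole approaching the real axis, $Im(x_k)\geq c'>0$ for every $k\neq i_1$ (using also the conservation of $\sum_j Im(x_j)$, which follows from $\sum_j\dot x_j\in\mathbb{R}$), and the poles themselves remain in a bounded set on a neighborhood of $t_0$. I exploit the explicit representation \eqref{expressionSoln} of $m$ evaluated at well-chosen real points to recover the spins $s_k$, $k\neq i_1$, by solving a linear system.

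Concretely, I fix $2(N-1)$ distinct real numbers $y_1,\ldots,y_{2(N-1)}$ in a bounded interval disjoint from the range of $Re(x_k(t))$ for $t$ near $t_0$; this keeps $|y_a-x_k(t)|\geq\eta_0>0$ uniformly for every $a,k$. Evaluating \eqref{expressionSoln} at $x=y_a$ and moving the $k=i_1$ contribution to the left yields
\begin{equation*}
b_a(t):=m(y_a,t)-m_0-\frac{is_{i_1}(t)}{y_a-x_{i_1}(t)}+\frac{i\bar s_{i_1}(t)}{y_a-\bar x_{i_1}(t)}=\sum_{k\neq i_1}\left(\frac{is_k(t)}{y_a-x_k(t)}-\frac{i\bar s_k(t)}{y_a-\bar x_k(t)}\right),
\end{equation*}
with $|b_a(t)|\leq 2+\tfrac{2|s_{i_1}(t)|}{\eta_0}$ uniformly bounded. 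Viewing the right-hand side as a linear system in the $2(N-1)$ vector unknowns $(s_k,\bar s_k)_{k\neq i_1}$, the coefficient matrix $M$ is a Cauchy matrix with row nodes $\{y_a\}$ and column nodes $\{x_k,\bar x_k\}_{k\neq i_1}$, whose determinant satisfies
\begin{equation*}
|\det M|=\frac{\prod_{a<b}|y_a-y_b|\,\prod_{\substack{k<l\\k,l\neq i_1}}|x_k-x_l|\,|\bar x_k-\bar x_l|\,\prod_{k,l\neq i_1}|x_k-\bar x_l|}{\prod_{a,\,k\neq i_1}|y_a-x_k|\,|y_a-\bar x_k|}.
\end{equation*}

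The main obstacle is the uniform lower bound $|\det M|\geq C_0>0$. This follows because the numerator is bounded below by the pole-separation hypothesis ($|x_k-x_l|\geq c$), by $|x_k-\bar x_l|\geq Im(x_k)+Im(x_l)\geq 2c'$ for $k,l\neq i_1$, and by the chosen spacing of the $y_a$; while the denominator is bounded above by the boundedness of the $y_a$ and of the poles near $t_0$. Crucially, excising the index $i_1$ from the system removes the problematic factor $|x_{i_1}-\bar x_{i_1}|=2Im(x_{i_1})\to 0$ that would otherwise cause the Cauchy determinant of the full $2N\times 2N$ system to degenerate. Once $\|M^{-1}\|$ is uniformly bounded, inverting the system with the uniform control on $|b_a(t)|$ yields $|s_k|\leq C$ for every $k\neq i_1$, which combined with $|s_{i_1}|\to 0$ gives the uniform bound $|s_i|\leq C$ for all $i$.
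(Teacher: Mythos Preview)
Your approach is the same as the paper's: use Lemma~\ref{encoreencoreun2} to know $|s_{i_1}|$ is bounded, move those terms to the bounded side, and invert the reduced $(2N-2)\times(2N-2)$ Cauchy system to bound the remaining $s_k$. The paper likewise writes $A\tilde S=\tilde P$ with $\tilde P$ bounded and shows the inverse Cauchy matrix has bounded entries, hence $|s_k|\le C$ for $k\neq i_1$.

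The one substantive difference is in how you control $\|M^{-1}\|$. You fix real evaluation points $y_a$ away from the poles and argue that $|\det M|\ge C_0>0$. This step relies on your assertion that ``the poles themselves remain in a bounded set on a neighborhood of $t_0$'': without it the denominator $\prod_{a,k}|y_a-x_k|$ in the Cauchy determinant can blow up faster than the numerator, driving $|\det M|\to 0$. That boundedness is not justified in your argument, and it is not obvious a priori (the evolution equation for $\ddot x_j$ involves the spins, which at this stage are only controlled by $C/|Im(x_{i_1})|$). The paper sidesteps this by letting the evaluation points move with the poles, taking $z_j=Re(x_j)+\tfrac{\eta}{3}$ type shifts, and then bounding the entries of $A^{-1}$ directly via the explicit Cauchy inverse formula; each ratio $|z_i-y_k|/|z_i-z_k|$ stays uniformly bounded regardless of where the poles sit. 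If you either supply a proof that $Re(x_k)$ stays bounded as $t\to t_0$, or switch to pole-tracking evaluation points as the paper does, your argument goes through.
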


From these three lemmas, we will deduce a contradiction. 

\begin{proof}(of lemma \ref{encoreencoreun})
		
The key fact for this proof will be that $m(x,t)$ has to be bounded for any $x$ and $t$. We will consider the boundedness of $m$ for $2\cdot |N|$ points: $\max(Re(x_{i_j})+j)$ for $1 \leq j \leq 2\cdot |N|$.

We have the following equality 

\begin{equation}\label{icilol2}
		|m(x,t)|= \left| m_0 + \sum_{k=1}^N \left( \frac{s_k(t)}{(x-x_k(t))} - \frac{\bar s_k}{(x-\bar x_k(t))} \right) \right| \leq C,
\end{equation}

From (\ref{icilol2}), we obtain the following system of linear equations 

\begin{equation}\label{unsystemelol}
	\left\{
	\begin{aligned}
	& p_1 = - \sum_{1\leq k\leq N} \frac{s_{k}}{(\max\left( Re(x_{j}) \right)+1-x_{k})} - \sum_{1\leq k\leq N} \frac{\bar s_{k}}{(\max\left( Re(x_{j}) \right)+1-\bar x_{k})} \\
	&[\cdots]\\
	& p_N = - \sum_{1\leq k\leq N} \frac{s_{k}}{(\max\left( Re(x_{j}) \right)+N-x_{k})} - \sum_{1\leq k\leq  N} \frac{\bar s_{k}}{(\max\left( Re(x_{j}) \right)+N-\bar x_{k})}\\
	\end{aligned}
	\right.
\end{equation}

where each $p_i$ is bounded. This means that we have

\begin{equation}\label{ecritmatrice}
	M S=P,~S=(s_i)_{1\leq i \leq N},~ |P|\leq C,
\end{equation}

where 

\begin{multline}
	M=\\
	\setlength\arraycolsep{0.3pt}
 \begin{pmatrix}
	\frac{1}{(\max\left( Re(x_{j}) \right)+1-x_{1})} & \cdots & \frac{1}{\max\left( Re(x_{j}) \right)+1-x_{N}} &\frac{1}{\max\left( Re(x_{i_j}) \right)+1-\bar x_{1}} & \cdots & \frac{1}{\max\left( Re(x_{j}) \right)+1-\bar x_{N}}\\
	\vdots & \ddots & \vdots & \vdots & \ddots & \vdots\\
	\frac{1}{\max\left( Re(x_{j}) \right)+2\cdot N-x_{1}} & \cdots & \frac{1}{\max\left( Re(x_{j}) \right)+2\cdot N-x_{N}} &\frac{1}{\max\left( Re(x_{j}) \right)+2\cdot N-\bar x_{1}} & \cdots & \frac{1}{\max\left( Re(x_{j}) \right)+2\cdot N-\bar x_{N}} \\ \end{pmatrix}
\end{multline}

Now, the determinant of the corresponding matrix can be computed as a Cauchy-determinant and is of form

\vspace{0.2cm}
\begin{center}
\resizebox{\linewidth}{!}{%
$
	\begin{aligned}
	\renewcommand{\arraystretch}{1.6}
	&\left| \begin{array}{cccccc}
	\frac{1}{(\max\left( Re(x_{j}) \right)+1-x_{1})} & \cdots & \frac{1}{\max\left( Re(x_{j}) \right)+1-x_{N}} &\frac{1}{\max\left( Re(x_{i_j}) \right)+1-\bar x_{1}} & \cdots & \frac{1}{\max\left( Re(x_{j}) \right)+1-\bar x_{N}}\\
	\vdots & \ddots & \vdots & \vdots & \ddots & \vdots\\
	\frac{1}{\max\left( Re(x_{j}) \right)+2\cdot N-x_{1}} & \cdots & \frac{1}{\max\left( Re(x_{i_j}) \right)+2\cdot N-x_{N}} &\frac{1}{\max\left( Re(x_{j}) \right)+2\cdot N-\bar x_{1}} & \cdots & \frac{1}{\max\left( Re(x_{j}) \right)+2\cdot N-\bar x_{N}} \\
	 \end{array} \right| \\
	&= \frac{\prod\limits_{i<j} (\max\left( Re(x_{j}) \right)+i-\max\left( Re(x_{j}) \right)-j)\prod\limits_{i<j} (\bar x_{i}-\bar x_{j})\prod\limits_{i<j} (x_{i}-x_{j})}{\prod\limits_{i,j} (\max\left( Re(x_{j}) \right)+i- \bar x_{j})\prod\limits_{i,j} (Re(x_{i}) - x_{j})} \\
	& =\frac{\prod\limits_{i<j} (i-j)\prod\limits_{i<j} (\bar x_{i}-\bar x_{j})\prod\limits_{i<j} (x_{i}-x_{j})}{\prod\limits_{i,j} (\max\left( Re(x_{j}) \right)+i- \bar x_{j})\prod\limits_{i,j} (Re(x_{i}) - x_{j})} \neq 0.
	\end{aligned}
$
}
\end{center}

Now, this determinant is not $0$ but it can be very small. We will look at the operator norm of the inverse of this matrix.


We can compute the inverse $B$ of this matrix, and $B=(b_{i,j})_{i,j}$ is given by

\begin{equation}\label{desbavec1}
b_{ij} = \frac{ \prod_{k=1}^{2N} (z_j + y_k)(z_k + y_i) } {(z_j + y_i) \left( \prod_{k\neq j}  (z_j - z_k) \right) \left( \prod_{k\neq i} (y_i - y_k) \right) },
\end{equation}

where $z_j= \max(Re(x_{i_j}))+j$ for $1 \leq j \leq 2 N$, $y_j=z_{i_j}$ for $1 \leq j \leq N$ and $y_j=\bar z_{i_j}$ for $N + 1 \leq j \leq 2 N$. 

\begin{remark}
	Note that the $z_i$ coefficients are chosen, they are the points we apply our equality to, but the $y_j$ coefficients must be the poles and their conjugates.
\end{remark}

Now, we look at the different parts of the coefficients $b_{i,j}$. We have the following

\begin{equation}
	\begin{aligned}
		&|z_j + y_k |= \left| \max(Re(x_i))+j + Re(x_k) \pm i\cdot Im(x_k)\right| \leq C,\\
		&|z_k + y_i |= \left| \max(Re(x_t))+k + Re(x_i) \pm i\cdot Im(x_i)\right| \leq C,\\
		&|z_i + y_j |= \left| \max(Re(x_t))+i + Re(x_j) \pm i\cdot Im(x_j)\right| \geq C,\\
		&|z_j-z_k|=|j-k|\geq 1,\\
		&|y_i-y_k| \geq \nu, ~ \text{ for } |i-k| \neq N, \\
		&|y_i-y_{i+|I|}| \geq |Im(x_i)| \geq C, ~i\neq i_1 \\
		&|y_{i_1}-y_{i_1+|I|}| \geq C \cdot |Im(x_{i_1})|, ~ \text{ for } |i-k| = N,~i\neq i_1 \\
	\end{aligned}
\end{equation}

We assume that the $x_j$ are bounded and deal with the general case at the end of the proof.

Now, all the coefficients of the inverse matrix satisfy

\begin{equation}
	|b_{i,j}| \leq \frac{C}{|Im(x_{i_1})|}.
\end{equation}

This means that the operator norm of the inverse matrix, denoted by $M^{-1}$, satisfies

\begin{equation}\label{triplenorm}
	|||M^{-1}||| \leq \frac{C}{|Im(x_{i_1})|}.
\end{equation}

We now plug (\ref{triplenorm}) in (\ref{ecritmatrice}) and obtain

\begin{equation}
	|S|\leq |||M^{-1}||| \cdot P \leq \frac{C}{|Im(x_i)|},
\end{equation}

which means that for any $j$, $s_j$ satisfies

\begin{equation}
	|s_j| \leq \frac{C}{|Im(x_{i_1})|}.
\end{equation}

This concludes the proof of lemma \ref{encoreencoreun}.

\begin{remark}
Note that the estimation of the operator norm of the inverse matrix is arbitrarily big. It makes sense because the column corresponding to $x_i$ and the column corresponding to $\bar x_i$ are very close, so the matrix is almost not invertible. 

\end{remark}

We now deal with the general case, where $x_j$ are not necessarily bounded. The additional step is that we have to prove that $b_{i,j}$ satisfies

\begin{equation}
	b_{ij} = \frac{ \prod_{k=1}^{2N} (z_j + y_k)(z_k + y_i) } {(z_j + y_i) \left( \prod_{k\neq j}  (z_j - z_k) \right) \left( \prod_{k\neq i} (y_i - y_k) \right) } \leq \frac{C}{Im(x_{i_1})}. 
\end{equation}

In this case, taking $z_k=\max(Re(x_j))+k$ for $1\leq k \leq 2N$ is not going to work. To obtain a better lower bound for the coefficients of the matrix we take coefficients that are closer to the poles. Concretely, we take

\begin{equation}
	\left\{
	\begin{aligned}
		z_k &= -x_k - \frac{\eta}{3}, ~ 1\leq k \leq N,\\
		z_k &= -\bar x_{N-k} - \frac{2\cdot \eta}{3}, ~ N+1 \leq k \leq 2N.
	\end{aligned}
	\right.
\end{equation}

We now look at the different coefficients of $b_{i,j}$. We have (for $1\leq j \leq N$ and $1\leq i \leq N$, the other cases are similar)

\begin{equation} \label{74lol}
	\begin{aligned}
	&\prod_{k\neq j} |z_k-z_j| = \prod_{k=1, k\neq j}^{N} |x_k-x_j| \prod_{k=N+1}^{N} |\bar x_{k-N}- \frac{\eta}{3} - x_j|,\\
	&\prod_{k\neq i} |y_i-y_k| = \prod_{k=1,k\neq i}^N |x_i-x_k| \prod_{k=N+1}^{2N} |x_i-\bar x_{k-N}|,\\
	&|z_j+y_i| = |y_i-y_j-\frac{\eta}{3}|, \\
	&\prod_{k} |z_j+y_k| = \prod_{k=1}^N |x_k-x_j-\frac{\eta}{3}| \prod_{k=N+1}^{2N} |\bar x_{k-N} - x_j-\frac{\eta}{3}|\\
	&\prod_{k} |z_k+y_i| = \prod_{k=1}^N |x_i-x_k-\frac{\eta}{3}| \prod_{k=N+1}^{2N} |x_i - \bar x_{k-N} - \frac{\eta}{3}|.
	\end{aligned}
\end{equation}

This means that we have for $i,j \neq i_1$

\begin{multline}\label{75lol}
	|b_{i,j}| = \frac{1}{|y_i-y_j-\frac{\eta}{3}|} \cdot \frac{\prod_{k=1}^N |x_k-x_j-\frac{\eta}{3}| \prod_{k=N+1}^{2N} |\bar x_{k-N} - x_j-\frac{\eta}{3}|}{\prod_{k=1, k\neq j}^{N} |x_k-x_j| \prod_{k=N+1}^{N} |\bar x_{k-N}- \frac{\eta}{3} - x_j|} \\ 
	\cdot \frac{\prod_{k=1}^N |x_i-x_k-\frac{\eta}{3}| \prod_{k=N+1}^{2N} |x_i - \bar x_{k-N} - \frac{\eta}{3}|}{\prod_{k=1,k\neq i}^N |x_i-x_k| \prod_{k=N+1}^{2N} |x_i-\bar x_{k-N}|}.
\end{multline}

Now,

\begin{multline}\label{76lol}
\frac{\prod_{k=1}^N |x_k-x_j-\frac{\eta}{3}| \prod_{k=N+1}^{2N} |\bar x_{k-N} - x_j-\frac{\eta}{3}|}{\prod_{k=1, k\neq j}^{N} |x_k-x_j| \prod_{k=N+1}^{N} |\bar x_{k-N}- \frac{\eta}{3} - x_j|} \\
= |\frac{\eta}{3}| \cdot |Im(x_j)-\frac{\eta}{3}| \cdot \frac{\prod_{k\neq j}^N |x_k-x_j-\frac{\eta}{3}| \prod_{k=N+1,k\neq j}^{2N} |\bar x_{k-N} - x_j-\frac{\eta}{3}|}{\prod_{k=1, k\neq j}^{N} |x_k-x_j| \prod_{k=N+1}^{N} |\bar x_{k-N}- \frac{\eta}{3} - x_j|} \\
\leq C \frac{\prod_{k\neq j}^N |x_k-x_j-\frac{\eta}{3}|}{\prod_{k=1, k\neq j}^{N} |x_k-x_j|} \leq C.
\end{multline}

Also,

\begin{multline}\label{77lol}
\frac{\prod_{k=1}^N |x_i-x_k-\frac{\eta}{3}| \prod_{k=N+1}^{2N} |x_i - \bar x_{k-N} - \frac{\eta}{3}|}{\prod_{k=1,k\neq i}^N |x_i-x_k| \prod_{k=N+1}^{2N} |x_i-\bar x_{k-N}|}\\ 
= \frac{\eta}{3} \cdot |Im(x_i)-\frac{\eta}{3}| \frac{\prod_{k=1,k\neq i}^N |x_i-x_k-\frac{\eta}{3}|}{\prod_{k=1,k\neq i}^N |x_i-x_k|}  \cdot \frac{\prod_{k=N+1,k\neq }^{2N} |x_i - \bar x_{k-N} - \frac{\eta}{3}|}{\prod_{k=N+1}^{2N} |x_i-\bar x_{k-N}|} \\
\leq C 
\end{multline}

This means that we have

\begin{equation}
	|b_{i,j}| \leq C.
\end{equation}

Now, if $j=i_1$ and $i\neq i_1$, the factor $y_{i_1} - y_{N+i_1} = 2 Im(x_{i_1})$ appears at the bottom, which leads to the corresponding coefficient in the upper-bound of $b_{i,j}$. If $i=i_1$, we obtain the factor $Im(x_{i_1}) + \eta/3$ which does not change the upper-bound. Finally, when both $i$ and $j$ are equal to $i_1$, the factor is the same again.
	
Finally, we have

\begin{equation}
	|b_{i,j}| \leq \frac{C}{|Im(x_{i_1})|}.
\end{equation}

\end{proof}

We now go on with the proof of lemma \ref{encoreencoreun2}. 

\begin{proof}(of lemma \ref{encoreencoreun2})

The key part here will be the fact that the quantity $||m||_{1/2}= ||m(\cdot,t)||_{H^{1/2}(\mathbb{R})}$ is conserved, as well as the fact that the real parts of the poles are well separated.

Again, we will use the following formula for the energy norm of $m$ and define $A$ and $B$ accordingly

	\begin{multline}\label{H1222}
		||m(\cdot,t)||_{H^{1/2}} ^2
		\\= \int_{y=-\infty}^{+\infty} \left( \int_{x=-\infty}^{+\infty} \frac{\partial }{\partial x} \left( m_0 + \sum_{j=1}^N \left( i 	\frac{s_j}{x-x_j} - i \frac{\bar s_j}{x-\bar x_j} \right) \right) \cdot \frac{1}{\sqrt{|x-y|}}  dy \right)^2 dx.\\
	\end{multline}	

We define 

\begin{equation}\label{definitiondeA}
	A(x,y,t) = \frac{1}{\sqrt{|x-y|}} \left[ 2 Re \left( \frac{s_1}{(x-x_1)^2}\right) + \sum_{j\neq 1} 2 Re \left( \frac{s_j}{(x-x_j)^2} \right) \right],
\end{equation}

and 

\begin{equation}\label{definitiondeB}
	B(x,t)=2 Re \left( \frac{s_1}{(x-x_1)^2}\right) + \sum_{j\neq 1} 2 Re \left( \frac{s_j}{(x-x_j)^2} \right).
\end{equation}

Using (\ref{definitiondeA}) and (\ref{definitiondeB}), we can rewrite $||m||_{1/2}$ as 

\begin{equation}\label{reecriture}
	||m||_{1/2}^2 = \int_{y=-\infty}^{+\infty} \left(\int_{x=-\infty}^{+\infty} A(x,y,t) \right)^2 = \int_{y=-\infty}^{+\infty} \left(\int_{x=-\infty}^{+\infty} \frac{1}{\sqrt{|x-y|}} B(x,t) \right)^2.
\end{equation}

Now, there exists $\delta>0$ such that $|t-t_0|< \delta$, $|x-Re(x_1)| < \delta$ and $|Re(x_1)-y|\leq \delta$ imply the three next equalities. From now on, we will assume that $|s_1|$ does not converge toward $0$, and hence we will restrict ourselves to times $t$ close to $t_0$ such that $|s_1(t)|\geq \varepsilon>0$.

\begin{itemize}
\item $|s_i| \leq \frac{C}{|Im(x_{i_1})|}$, 
\item $|x_i|, |Re(x_i)|, |Im(x_i)| \leq C$.
\end{itemize}

Our plan is as follows. We will first explain what we can consider that $|x-x_1|\leq \eta/2$. Then, we find a lower bound for $A(Re(x_1),y,t)$ that converges toward $+\infty$. After this, we will find a $\delta_x$ such that $B(x,t)$ is close enough to $B(Re(x_1),t)$ when $|x-Re(x_1)|<\delta_x$. Then, we proceed with the integration and find that $||m||_{1/2}\rightarrow +\infty$ as $t\rightarrow 0$. 

We restrict ourselves to $y\in [Re(x_1)-Im(x_1),Re(x_1)+Im(x_1)]$. We will denote $x=z+Re(x_1)$, and $y=\tilde{y}+Re(x_1)$. 

We write 

\begin{equation}
B(x,t) = 2 Re \left( \frac{s_1}{(x-x_1)^2}\right) + \sum_{j\neq 1} 2 Re \left( \frac{s_j}{(x-x_j)^2} \right) = B_1(x,t) + B_2(x,t),
\end{equation}

and define

\begin{equation}\label{lesilol}
	\int_{x=-\infty}^{\infty} \left( \frac{1}{\sqrt{|x-y|}} B_1(x,t) +  \frac{1}{\sqrt{|x-y|}} B_2(x,t) \right)  =: I_1(y,t) + I_2(y,t).
\end{equation}

Now, our goal will be to show that $\left| \int \frac{1}{|x-y|} B_1 \right| \rightarrow \infty$ and that $\left| \int \frac{1}{|x-y|} B_2 \right| \leq C$. We will first find a lower bound for $I_1$. 

We give an expression for $B_1$ first.

\begin{multline}
	2 Re\left( \frac{s_1}{(x-x_1)^2} \right) = \frac{z^2 Re(s_1) - Im^2(x_1) Re(s_1) + 2\cdot z Im(x_1)Im(s_1)}{\left( z^2+Im(x_1)^2 \right)^2}\\
	=\left( \frac{z^2}{(z^2+Im(x_1)^2)^2} - \frac{Im(x_1)^2}{(z^2+Im(x_1)^2)^2} \right) Re(s_1) + \frac{2\cdot z Im(x_1)}{\left( z^2+Im(x_1)^2 \right)^2} Im(s_1) \\
	= (C-D) Re(s_1) + E Im(s_1).
\end{multline}

Now, we are in the following situation. $C$ and $D$ lead to unbounded integrals, but they could cancel out. In fact, we have that

\begin{equation}
\int_{z\in \mathbb{R}} \left(C - D\right) = \frac{\pi}{2\cdot Im(x_1)} - \frac{\pi}{2 \cdot Im(x_1)}=0,
\end{equation}

so they actually cancel out. However, here the weight function $\frac{1}{\sqrt{|x-y|}}$ is added, so we expect it to break the balance between the two terms. We have chosen $y$ so that the weight is big when the functions are the most different, which is when $z$ is close to $0$. 
We define $C_y = \frac{1}{\sqrt{|x-y|}} C$ and $D_y = \frac{1}{\sqrt{|x-y|}} D$. We will now try to find an upper bound for $C_y$ and a lower bound (in norm) for $D_y$. 

We have by the change of variable $z=Im(x_1)\cdot \omega$,

\begin{equation}
\int_{z\in \mathbb{R}} \frac{1}{\sqrt{|z-\tilde y|}} \frac{z^2}{(z^2+Im(x_1)^2)^2} = \frac{1}{|Im(x_1)|^{3/2}} \int_{\omega \in \mathbb{R}} \frac{1}{\sqrt{\left|\omega-\frac{\tilde y}{|Im(x_1)|}\right|}} \frac{\omega^2}{(1+\omega^2)^2}
\end{equation}

With $\tilde y = |Im(x_1)| \cdot v$, we have

\begin{equation}
	\int_{z\in \mathbb{R}} C(x,y,t) = \frac{1}{|Im(x_1)|^{3/2}} \cdot f(v), 
\end{equation}

where 

\begin{equation}
	f(v) = \int_{\omega \in \mathbb{R}} \frac{1}{\sqrt{\left| \omega-v \right|}} \frac{\omega^2}{(1+\omega^2)^2}.
\end{equation}

Now, we have that $f(0) = \frac{3\pi}{2^{3/2}}$, this comes from the fact that when $\tilde y=0$,

\begin{equation}
\int_{z\in \mathbb{R}} C_y(\tilde y=0) =  \int_{z\in \mathbb{R}} \frac{1}{|z|} \cdot \frac{z^2}{(z^2+Im(x_1)^2)^2} = \frac{\pi}{2^{3/2} |Im(x_1)|^{3/2}}.
\end{equation}

We now deal with $D$.

We have by the change of variable $z=Im(x_1)\cdot \omega$,

\begin{equation}
\int_{z\in \mathbb{R}} \frac{1}{\sqrt{|z-\tilde y|}} \frac{|Im(x_1)|^2}{(z^2+Im(x_1)^2)^2} = \frac{1}{|Im(x_1)|^{3/2}} \int_{\omega \in \mathbb{R}} \frac{1}{\sqrt{\left|\omega-\frac{\tilde y}{|Im(x_1)|}\right|}} \frac{1}{(1+\omega^2)^2}
\end{equation}

With $\tilde y = |Im(x_1)| \cdot v$, we have

\begin{equation}
	\int_{z\in \mathbb{R}} D(x,y,t) = \frac{1}{|Im(x_1)|^{3/2}} \cdot g(v), 
\end{equation}

where 

\begin{equation}
	f(v) = \int_{\omega \in \mathbb{R}} \frac{1}{\sqrt{\left| \omega-v \right|}} \frac{1}{(1+\omega^2)^2}.
\end{equation}

Now, we have that $f(0) = \frac{\pi}{2^{3/2}}$, this comes from the fact that when $\tilde y=0$,

\begin{equation}
\int_{z\in \mathbb{R}} C_y(\tilde y=0) =  \int_{z\in \mathbb{R}} \frac{1}{|z|} \cdot \frac{|Im(x_1)|^2}{(z^2+Im(x_1)^2)^2} = \frac{\pi}{2^{3/2} |Im(x_1)|^{3/2}}.
\end{equation}

Finally, we obtain

\begin{equation}
	2Re\left( \frac{s_1}{(x-x_1)^2} \right) = \left( \frac{f(v)-g(v)}{|Im(x_1)|^{3/2}} \right) Re(s_1) + E \cdot Im(s_1).
\end{equation}

By continuity of $f-g$, there exists $\delta$ such that $|v|\leq \delta$ implies $|f-g|\geq \frac{1}{10}$. Now, the previous equality is an equality between two vectors. The coefficient in front of $Re(s_1)$ is unbounded. We will use the fact that $Re(s_1)$ and $Im(s_1)$ are orthogonal to deduce that the whole vector is unbounded. We have

\begin{equation}
	Re(s_1) \cdot 2Re\left( \frac{s_1}{(x-x_1)^2} \right) = \left( \frac{f(v)-g(v)}{|Im(x_1)|^{3/2}} \right) \cdot |Re(s_1)|^2.
\end{equation}

This means that 

\begin{equation}
	\left| 2Re \left( \frac{s_1}{(x-x_1)^2} \right) \right| \geq \frac{1}{10\cdot |Im(x_1)|^{3/2}} \cdot |Re(s_1)|. 
\end{equation}

Now, since $|Re(s_1)| = |Im(s_1)|$, we obtain $|Re(s_1)| = \frac{1}{\sqrt{2}} |s_1|$.

This is valid for $\tilde y \in [-\delta\cdot |Im(x_1)|, \delta \cdot |Im(x_1)|]$ where $\delta$ is an absolute constant. We will now go on and compute an upper bound for $I_2$ (defined in (\ref{lesilol})). We have for $j\neq i_1$ that

\begin{equation}
	\left| 2 Re\left( \frac{s_j}{(x-x_j)^2} \right) \right| \leq |s_j| \cdot \frac{1}{|x-x_j|^2} \leq \frac{C}{|Im(x_1)|} \cdot \frac{1}{|(x-Re(x_j))^2 + Im(x_j)^2|}.
\end{equation}

Since $|Im(x_j)|$ is bounded from below, we obtain 

\begin{multline}
\left| \int_{x\in \mathbb{R}} \frac{1}{\sqrt{|x-y|}} 2 Re\left( \frac{s_j}{(x-x_j)^2} \right) \right| \\ \leq_{Holder} \frac{C}{|Im(x_1)|} \cdot \left( \int_{x\in\mathbb{R}} \frac{1}{|x-y|^{9/10}} \right)^{5/9} \\ \cdot \left( \int_{x\in \mathbb{R} }\frac{1}{|(x-Re(x_j))^2 + Im(x_j)^2|^{9/4}} \right)^{4/9} \leq \frac{C}{|Im(x_1)|}.
\end{multline}

Now, we have

\begin{equation}
\int_{y=Re(x_{i_1}) - \delta \cdot |Im(x_{i_1})|}^{Re(x_{i_1}) + \delta \cdot |Im(x_{i_1})|} \left( A(x,y,t) \right)^2 \geq 2 \delta \cdot |Im(x_1)| \cdot \frac{C^2}{|Im(x_1)|^{3}} \geq \frac{C}{|Im(x_1)|},
\end{equation}

which is what we wanted. Indeed, since we assumed that $|s_1|\geq \varepsilon$, we deduce that the integral diverges and hence $||m||_{1/2}$ diverges as well. This concludes the proof of lemma \ref{encoreencoreun2}.

\end{proof}

We finally go on with the proof of lemma \ref{encoreencoreun3}. The arguments are similar to the ones used in the proof of lemma \ref{encoreencoreun}, so we will focus on the elements of the proof that are not identical. It is based on the fact that $m(x,t)$ has to be bounded for any $x$ and $t$ and uses an invertible Cauchy matrix.

\begin{proof}

Since $s_1\rightarrow 0$, we obtain that $s_1$ is bounded when $t$ is close to $t_0$. Now, we write as in (\ref{icilol2}) 
	
	\begin{equation}\label{icilol22}
			|m(x,t)|= \left| m_0 + \sum_{k=1}^N \left( \frac{s_k(t)}{(x-x_k(t))} - \frac{\bar s_k}{(x-\bar x_k(t))} \right) \right| \leq C,
	\end{equation}
	
	and as in (\ref{unsystemelol}), we obtain the system of linear equations
	
	\begin{equation}\label{unsystemelol2}
		\left\{
		\begin{aligned}
		& p_1 = - \sum_{1\leq k\leq N} \frac{s_{k}}{(z_1-x_{k})} - \sum_{1\leq k\leq N} \frac{\bar s_{k}}{(z_1-\bar x_{k})} \\
		&[\cdots]\\
		& p_{2N} = - \sum_{1\leq k\leq N} \frac{s_{k}}{(z_{2N}-x_{k})} - \sum_{1\leq k\leq  N} \frac{\bar s_{k}}{(z_{2N}-\bar x_{k})}\\
		\end{aligned}
		\right.
	\end{equation}

	where each $p_i$ is bounded, and $z_N$ can be chosen in $\mathbb{R}$. If we want the matrix to be invertible, we want that $z_i \neq z_j$, and $z_i \neq x_j,\bar x_j$. Now, since $x_{i_1}$ is bounded, we can put the corresponding terms on the left-hand side and obtain (we remove the $i_1$ and the $N+i_1$ lines)

	\begin{equation}\label{unsystemelol3}
		\left\{
		\begin{aligned}
		& \tilde p_1 = - \sum_{k \neq i_1} \frac{s_{k}}{(z_1-x_{k})} - \sum_{k\neq i_1} \frac{\bar s_{k}}{(z_1-\bar x_{k})} \\
		&[\cdots]\\
		& \tilde p_{2N} = - \sum_{k \neq i_1} \frac{s_{k}}{(z_{2N}-x_{k})} - \sum_{k \neq i_1} \frac{\bar s_{k}}{(z_{2N}-\bar x_{k})}\\
		\end{aligned}
		\right.
	\end{equation}
	
	This is a system of the form $AS=\tilde{P}$, where $\tilde P$ is bounded. The coefficients of the inverse of $A$ are similar to the ones described in (\ref{desbavec1}), but without the term corresponding to $x_{i_1}$, i.e. (expression that makes sense only when $i,j\neq i_1,N+i_1$)

	\begin{equation}\label{desbavec12}
	b_{ij} = \frac{ \prod_{k=1,k\neq i_1, k\neq N+ i_1}^{2N} (z_j + y_k)(z_k + y_i) } {(z_j + y_i) \left(  \prod_{k=1,k\neq i_1, k\neq N+ i_1,k\neq j}^{2N}   (z_j - z_k) \right) \left( \prod_{k=1,k\neq i_1, k\neq N+ i_1,k\neq i}^{2N}  (y_i - y_k) \right) }.
	\end{equation}
	
	Now, we can for instance choose 
	
	\begin{equation}
	\begin{aligned}
	z_j = -x_j - \frac{\eta}{3}, ~ 1 \leq j \leq N,~ j\neq i_1\\
	z_j = -\bar x_j - \frac{2\cdot \eta}{3}, ~ N+1 \leq j \leq 2N,~ j\neq N+i_1,\\
	\end{aligned}
	\end{equation}
	
	and obtain by the same argument as in (\ref{74lol}), (\ref{75lol}), (\ref{76lol}) and  (\ref{77lol}) that
	
	\begin{equation}
	|b_{i,j}| \leq C.
	\end{equation}
	
	Hence, since $S = A^{-1} \cdot \tilde P$, we have that $s_k$ is bounded for $k\neq i_1$. This concludes the proof of lemma \ref{encoreencoreun3}.
	
\end{proof}

We now conclude. We have

\begin{equation}
	|\dot s_1(t)| = 4 \left| \sum_{k=1}^N \frac{s_1 \times s_k}{ (x_1-x_k)^2} \right| \leq |s_1| \cdot \frac{4\cdot N \cdot C}{\eta^2} \leq C'|s_1|.
\end{equation}

Now, this leads to 

\begin{equation}
	-C' \leq \frac{\dot{|s_1|}}{|s_1|} \leq C',
\end{equation}

so

\begin{equation}
	-C'(t-t')\leq \ln(|s_1(t)|)-\ln(|s_1(t')|) \leq C'(t-t').
\end{equation}

The left hand side is bounded as $t\rightarrow t_0$ while the middle term satisfies $\ln(|s_1(t)|)-\ln(|s_1(t')|) \rightarrow -\infty$ as $t\rightarrow t_0$, which is a contradiction.

\subsection{Case where several poles are going toward the real axis: proof with an algebraic expression of the energy norm}

In this subsection, we deal with the general case (under the assumption that the real parts of the poles are well separated). We will mainly focus on the elements of the proof that are different, and skip some details for the parts of the proof that are identical to what can be found in the previous section.

First, we state a slightly modified version of our three lemmas. We define $J$ as the set of indices such that $i\in J$ implies $Im(x_i)\rightarrow 0$ as $t \rightarrow t_0$.

\begin{lemma}\label{encoreencoreun23}
	For any $1 \leq i \leq N$, we have that $|s_i|\leq \frac{C}{\min |Im(x_{i_1})|}$.
\end{lemma}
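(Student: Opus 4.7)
The plan is to extend the Cauchy-matrix argument from the proof of Lemma \ref{encoreencoreun} to the case where several imaginary parts $Im(x_j)$, $j \in J$, simultaneously approach zero. As before, the starting point is the uniform bound $|m(z,t)| \leq 1$ evaluated at $2N$ well-chosen real test points $z_1, \dots, z_{2N}$; this produces a linear system $M S = P$ where $S = (s_1, \dots, s_N, \bar s_1, \dots, \bar s_N)^T$, $|P| \leq C$, and $M$ is a Cauchy matrix whose inverse $(b_{ij})$ admits the explicit formula (\ref{desbavec1}). The claimed bound on $|s_i|$ will then follow from $|S| \leq |||M^{-1}||| \cdot |P|$.

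The concrete choice of test points I would use mirrors the successful one from the second half of the proof of Lemma \ref{encoreencoreun}, namely $z_k = -x_k - \eta/3$ for $1 \leq k \leq N$ and $z_k = -\bar x_{k-N} - 2\eta/3$ for $N+1 \leq k \leq 2N$. This choice makes every numerator factor $z_j + y_k$ and $z_k + y_i$ uniformly bounded above (using that the $x_j$ are bounded), and makes every denominator factor $z_j - z_k$ and $z_j + y_i$ bounded below by a positive constant depending only on $\eta$ (using the separation of real parts).

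The core of the argument is then the estimate of $\prod_{k \neq i}(y_i - y_k)$ in the denominator of $b_{ij}$. For fixed row index $i$ (say $i \leq N$), write the product as $(y_i - y_{N+i}) \cdot \prod_{k \neq i, N+i}(y_i - y_k)$. The first factor equals $x_i - \bar x_i = 2i \cdot Im(x_i)$, which can be arbitrarily small, and is the only source of a small denominator. Every remaining factor $y_i - y_k$ is either $x_i - x_k$ (for $k \leq N$, $k \neq i$) or $x_i - \bar x_{k-N}$ (for $k > N$, $k - N \neq i$), and in both cases $|Re(x_i) - Re(x_{k\,\mathrm{or}\,k-N})| \geq \eta$ guarantees a uniform lower bound, independent of $J$. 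Consequently $|b_{ij}| \leq C/|Im(x_i)|$ when $i \in J$ and $|b_{ij}| \leq C$ otherwise, so summing over $j$ and taking the worst row yields $|s_i| \leq C/\min_{j \in J} |Im(x_j)|$ uniformly in $i$.

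The main obstacle, and the reason the generalization is not automatic, is the risk that having several indices in $J$ produces \emph{several} small factors in the same denominator $\prod_{k \neq i}(y_i - y_k)$, which would degrade the bound to $1/\prod_{j \in J}|Im(x_j)|$. The point is that only the paired factor $y_i - y_{N+i}$ (the diagonal conjugate pair for row $i$) is small; the cross factors $y_i - y_{j'}$ and $y_i - y_{N+j'}$ with $j' \in J$, $j' \neq i$, are kept away from zero purely by the real-part separation hypothesis, irrespective of how $Im(x_{j'})$ behaves. Once this is verified the rest of the proof is identical to the case $|J|=1$.
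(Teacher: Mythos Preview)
Your proposal is correct and follows essentially the same approach as the paper: the same choice of test points $z_k = -x_k - \eta/3$, $z_{N+k} = -\bar x_k - 2\eta/3$, the same explicit Cauchy-inverse formula, and the same key observation that in $\prod_{k\neq i}(y_i - y_k)$ the only small factor is the paired conjugate $y_i - y_{N+i} = 2i\,Im(x_i)$, the others being bounded below by the real-part separation regardless of how many indices lie in $J$. One minor remark: you justify the numerator bound by saying the individual factors $z_j + y_k$ are bounded above ``using that the $x_j$ are bounded''; the paper instead pairs each numerator factor with a denominator factor and bounds the ratios $|x_k - x_j - \eta/3|/|x_k - x_j|$ directly, which avoids any appeal to boundedness of the poles, though in the present finite-time setting your simpler version is also fine.
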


\begin{lemma}\label{encoreencoreun234}
For any $j\in J$, we have that $|s_j|\rightarrow 0$ up to a subsequence.
\end{lemma}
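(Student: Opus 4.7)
The plan is to combine a residue-theoretic algebraic formula for $\|m\|_{H^{1/2}}^2$ with a rescaled positive-definiteness argument on the resulting Cauchy-type matrix. First I will derive the identity
\begin{equation*}
\|m(\cdot,t)\|_{H^{1/2}}^2 \;=\; \pi \sum_{j=1}^N \frac{|s_j|^2}{Im(x_j)^2} \;-\; 4\pi \sum_{j\neq k} Re\!\left[\frac{s_j\cdot \bar s_k}{(x_j-\bar x_k)^2}\right].
\end{equation*}
The derivation proceeds by computing $|\nabla|$ of each elementary pole via the Hardy space structure of $1/(x-x_j)$ (for $Im(x_j)>0$ this yields $|\nabla|\!\left(\frac{i s_j}{x-x_j}\right) = \frac{s_j}{(x-x_j)^2}$, and similarly for the conjugate poles), plugging the result into $\int m \cdot |\nabla| m\,dx$, and evaluating each integrand by contour integration; only the cross-terms between upper and lower half-plane poles contribute, producing the identity above.

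I then view the right-hand side as a Hermitian quadratic form $\|m\|_{H^{1/2}}^2 = s^* M s$ with $M_{jk}=-4\pi/(x_j-\bar x_k)^2$, which is positive definite because $\|m\|_{H^{1/2}}^2 > 0$ whenever the spins are not all zero. To exploit the blow-up of the diagonal on $J$, I rescale by $D_{jj} = \pi/Im(x_j)^2$ and set $\tilde M = D^{-1/2} M D^{-1/2}$, so that $\tilde M_{jj}=1$ and $\tilde M_{jk} = -4\,Im(x_j)\,Im(x_k)/(x_j-\bar x_k)^2$ for $j\neq k$. Using the separation assumption $|Re(x_j)-Re(x_k)|\geq \eta$ and the lower bound $Im(x_k)\geq c>0$ for $k\notin J$, every off-diagonal entry of $\tilde M$ involving an index of $J$ satisfies $|\tilde M_{jk}| \leq 4\,Im(x_j)\,Im(x_k)/\min(\eta,c)^2$ and so tends to $0$ along the subsequence on which $Im(x_j)\to 0$. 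After extracting a further subsequence on which every pole converges, $\tilde M$ converges to the block-diagonal matrix $I_J \oplus \tilde M^*_{J^c}$; the $J^c$-block is positive definite because its limiting poles are pairwise distinct with $Im(x_j^*)\geq c$, so the associated $H^{1/2}$-form on rational functions with just those poles is still non-degenerate. By continuity of eigenvalues, $\lambda_{\min}(\tilde M(t_n)) \geq c'' > 0$ along the subsequence.

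To conclude by contradiction, suppose that $|s_{j_0}(t_n)| \geq \varepsilon_0 > 0$ along the chosen subsequence for some $j_0 \in J$. The spectral bound then gives
\begin{equation*}
\|m\|_{H^{1/2}}^2 \;=\; (D^{1/2} s)^*\,\tilde M\,(D^{1/2} s) \;\geq\; c''\,\pi \sum_j \frac{|s_j|^2}{Im(x_j)^2} \;\geq\; \frac{c''\,\pi\,\varepsilon_0^2}{Im(x_{j_0}(t_n))^2} \;\longrightarrow\; +\infty,
\end{equation*}
contradicting the conservation of $\|m\|_{H^{1/2}}$. The main technical hurdle I foresee is establishing the uniform spectral lower bound for $\tilde M$, which reduces to verifying the non-degeneracy of the limiting $J^c$-block; this is a Cauchy-matrix-type determinant computation in the same spirit as the one already used in the proof of Lemma \ref{encoreencoreun}, exploiting the separation of the $Re(x_j)$ and the strict positivity of $Im(x_j)$ for $j\notin J$.
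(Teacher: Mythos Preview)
Your argument is correct and starts from the same algebraic identity for $\|m\|_{H^{1/2}}^2$ that the paper quotes from Matsuno, but the way you extract the conclusion is genuinely different. The paper splits the double sum $\sum_{j,k} s_j\cdot\bar s_k/(x_j-\bar x_k)^2$ into four blocks according to whether $j,k$ lie in $J$ or $J^c$; it bounds the $J\times J$ and $J\times J^c$ cross terms directly using the separation $|x_j-\bar x_k|\geq\eta$ together with the a~priori estimate $|s_i|\leq C/\min_k|Im(x_k)|$ from the preceding lemma, and disposes of the $J^c\times J^c$ block not by estimating it but by observing it equals $\|\tilde m\|_{H^{1/2}}^2\geq 0$ for the truncated rational function $\tilde m$. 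Your rescaling $\tilde M=D^{-1/2}MD^{-1/2}$ and eigenvalue-continuity argument packages both of these steps into a single spectral lower bound: the off-diagonal decay that the paper estimates by hand becomes the statement $\tilde M\to I_J\oplus\tilde M^*_{J^c}$, and the positivity of the $J^c$-block is exactly the positive definiteness of $\tilde M^*_{J^c}$ that you invoke. One advantage of your route is that it never uses the a~priori spin bound from Lemma~\ref{encoreencoreun23}, so it is logically more self-contained; the paper's route is perhaps more elementary since it avoids the compactness/subsequence extraction needed to pass to a limit matrix. A small point of language: where you write ``non-degenerate'' for the limiting $J^c$-block you really need \emph{positive definite}, but since the form is an $H^{1/2}$ norm this follows immediately from non-degeneracy, and your justification via distinctness of the limiting poles is the same one the paper uses.
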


\begin{lemma}\label{encoreencoreun334}
	For any $1 \leq i \leq N$, we in fact have $|s_i|\leq C$.
\end{lemma}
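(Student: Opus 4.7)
The plan is to adapt the argument used for Lemma~\ref{encoreencoreun3} to the general case in which several poles approach the real axis. By Lemma~\ref{encoreencoreun234}, for every $j \in J$ we have $|s_j|\to 0$ (up to a subsequence), so each such spin is bounded near $t_0$. Only the spins $s_k$ with $k \notin J$ remain to be controlled, and for those $|Im(x_k)|$ is bounded below by a positive constant---precisely the condition whose failure produced the $1/|Im(x_{i_1})|$ loss in Lemma~\ref{encoreencoreun23} but which is harmless in Lemma~\ref{encoreencoreun3}.

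First I would write the pointwise identity $|m(x,t)|\le 1$ evaluated at $2(N-|J|)$ carefully chosen real points $z_1,\dots,z_{2(N-|J|)}$, move all the now-bounded contributions coming from $\{s_j,\bar s_j : j\in J\}$ into a bounded right-hand side $\widetilde P$, and keep the terms involving $\{s_k, \bar s_k : k \notin J\}$ on the left. This yields a square linear system
\begin{equation}
A\, S = \widetilde P, \qquad S = (s_k,\, \bar s_k)_{k\notin J}, \qquad |\widetilde P|\le C,
\end{equation}
where $A$ is a Cauchy-type matrix whose entries are of the form $\frac{1}{z_i - x_k}$ and $\frac{1}{z_i - \bar x_k}$ restricted to $k\notin J$.

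To bound $\|A^{-1}\|$, I would choose the $z_i$ in the spirit of Lemma~\ref{encoreencoreun3}, namely $z_i = -x_{\sigma(i)} - \eta/3$ or $z_i = -\bar x_{\sigma(i)} - 2\eta/3$ for an appropriate indexing $\sigma$ of $\{1,\dots,N\}\setminus J$. The explicit Cauchy inverse formula then gives
\begin{equation}
b_{ij} = \frac{\prod_{k} (z_j + y_k)(z_k + y_i)}{(z_j + y_i)\prod_{k\neq j}(z_j - z_k)\prod_{k\neq i}(y_i - y_k)},
\end{equation}
where all products now run only over the reduced index set. The dangerous difference $y_i - y_{i+(N-|J|)} = 2i\, Im(x_k)$ is bounded below for every surviving $k$, and together with the pole-separation hypothesis $|x_i - x_j|\ge \eta$ and the estimates~(\ref{74lol})--(\ref{77lol}), this produces $|b_{ij}|\le C$ uniformly in $t$. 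Hence $|S|\le \|A^{-1}\|\cdot|\widetilde P|\le C$, which, combined with Lemma~\ref{encoreencoreun234}, gives $|s_i|\le C$ for every $i$.

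The main obstacle I anticipate is purely combinatorial bookkeeping: checking that after eliminating the $J$-indexed rows and columns the remaining matrix is genuinely Cauchy-type (so that the explicit inverse formula applies with the same structure), and that the calculations~(\ref{74lol})--(\ref{77lol}) translate verbatim to the reduced index set. Conceptually, nothing new is required: the only source of blow-up in Lemma~\ref{encoreencoreun23}---a vanishing imaginary part---has been removed precisely for those spins we still need to bound.
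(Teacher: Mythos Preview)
Your proposal is correct and follows essentially the same route as the paper's proof: both use Lemma~\ref{encoreencoreun234} to bound the $J$-indexed spins, move those contributions to the right-hand side, reduce to a $(2N-2|J|)\times(2N-2|J|)$ Cauchy system in the remaining spins, choose the sample points $z_j=-x_j-\eta/3$ and $z_j=-\bar x_j-2\eta/3$ over the reduced index set, and invoke the estimates (\ref{74lol})--(\ref{77lol}) on the explicit Cauchy inverse to conclude. The combinatorial bookkeeping you flag is exactly what the paper glosses over as well.
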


From these three lemmas, we will deduce a contradiction. 

We first start with the proof of lemma (\ref{encoreencoreun23}). The proof shares similarities with the proof of lemma \ref{encoreencoreun}, but is not identical.

\begin{proof}

Using the boundedness of $m(x,t)$ for any $x$ and $t$, and applying this fact for $2N$ different points $x=z_i$, $1\leq i \leq 2N$, we obtain a system of the form

	\begin{equation}\label{unsystemelol22}
		\left\{
		\begin{aligned}
		& p_1 = - \sum_{1\leq k\leq N} \frac{s_{k}}{(z_1-x_{k})} - \sum_{1\leq k\leq N} \frac{\bar s_{k}}{(z_1-\bar x_{k})} \\
		&[\cdots]\\
		& p_{2N} = - \sum_{1\leq k\leq N} \frac{s_{k}}{(z_{2N}-x_{k})} - \sum_{1\leq k\leq  N} \frac{\bar s_{k}}{(z_{2N}-\bar x_{k})}\\
		\end{aligned}
		\right.
	\end{equation}

Now, using the same arguments, we find that coefficients of the inverse of this Cauchy matrix are of the form

	\begin{equation}\label{desbavec123}
	b_{ij} = \frac{ \prod_{k=1}^{2N} (z_j + y_k)(z_k + y_i) } {(z_j + y_i) \left( \prod_{k\neq j}  (z_j - z_k) \right) \left( \prod_{k\neq i} (y_i - y_k) \right) },
	\end{equation}

where we have and chose for $z_j$

\begin{equation}
	\left\{
	\begin{aligned}
	y_j &= x_j,~ 1 \leq j \leq N, \\
	y_j &= \bar x_{j-N},~ N+1 \leq j \leq 2N, \\
	z_j &= -x_j - \frac{\eta}{3}, ~ 1 \leq j \leq N,\\
	z_j &= -\bar x_j - \frac{2\cdot \eta}{3}, ~ N+1 \leq j \leq 2N.\\
	\end{aligned}
	\right.
\end{equation}

Now, a similar argument as in (\ref{74lol}), (\ref{75lol}), (\ref{76lol}) and  (\ref{77lol}) gives that for $i\notin J$ when $i \leq N$ and $i-N \notin J$ when $i\geq N+1$, 

\begin{equation}
|b_{i,j}| \leq C.
\end{equation}

When $i$ is in $j$ or $i-N$ is in $J$, we have (assuming $i\leq N$ and $j\in J$),

\begin{equation} \label{74lol2}
	\begin{aligned}
	&\prod_{k\neq j} |z_k-z_j| = \prod_{k=1, k\neq j}^{N} |x_k-x_j| \prod_{k=N+1}^{N} |\bar x_{k-N}- \frac{\eta}{3} - x_j|,\\
	&\prod_{k\neq i} |y_i-y_k| = \prod_{k=1,k\neq i}^N |x_i-x_k| \prod_{k=N+1, k\neq N+i}^{2N} |x_i-\bar x_{k-N}| \cdot (2 Im(x_i)),\\
	&|z_j+y_i| = |y_i-y_j-\frac{\eta}{3}|, \\
	&\prod_{k} |z_j+y_k| = \prod_{k=1}^N |x_k-x_j-\frac{\eta}{3}| \prod_{k=N+1}^{2N} |\bar x_{k-N} - x_j-\frac{\eta}{3}|\\
	&\prod_{k} |z_k+y_i| = \prod_{k=1}^N |x_i-x_k-\frac{\eta}{3}| \prod_{k=N+1}^{2N} |x_i - \bar x_{k-N} - \frac{\eta}{3}|.
	\end{aligned}
\end{equation}

\begin{multline}\label{75lol2}
	|b_{i,j}| = \frac{1}{|y_i-y_j-\frac{\eta}{3}|} \cdot \frac{\prod_{k=1}^N |x_k-x_j-\frac{\eta}{3}| \prod_{k=N+1}^{2N} |\bar x_{k-N} - x_j-\frac{\eta}{3}|}{\prod_{k=1, k\neq j}^{N} |x_k-x_j| \prod_{k=N+1}^{N} |\bar x_{k-N}- \frac{\eta}{3} - x_j|} \\ 
\cdot \frac{1}{2Im(x_i)}	\cdot \frac{\prod_{k=1}^N |x_i-x_k-\frac{\eta}{3}| \prod_{k=N+1}^{2N} |x_i - \bar x_{k-N} - \frac{\eta}{3}|}{\prod_{k=1,k\neq i}^N |x_i-x_k| \prod_{k=N+1,k\neq N+i}^{2N} |x_i-\bar x_{k-N}|}.
\end{multline}

Now,

\begin{multline}\label{76lol2}
\frac{\prod_{k=1}^N |x_k-x_j-\frac{\eta}{3}| \prod_{k=N+1}^{2N} |\bar x_{k-N} - x_j-\frac{\eta}{3}|}{\prod_{k=1, k\neq j}^{N} |x_k-x_j| \prod_{k=N+1}^{N} |\bar x_{k-N}- \frac{\eta}{3} - x_j|} \\
= |\frac{\eta}{3}| \cdot |Im(x_j)-\frac{\eta}{3}| \cdot \frac{\prod_{k\neq j}^N |x_k-x_j-\frac{\eta}{3}| \prod_{k=N+1,k\neq j}^{2N} |\bar x_{k-N} - x_j-\frac{\eta}{3}|}{\prod_{k=1, k\neq j}^{N} |x_k-x_j| \prod_{k=N+1}^{N} |\bar x_{k-N}- \frac{\eta}{3} - x_j|} \\
\leq C \frac{\prod_{k\neq j}^N |x_k-x_j-\frac{\eta}{3}|}{\prod_{k=1, k\neq j}^{N} |x_k-x_j|} \leq C.
\end{multline}

Also,

\begin{multline}\label{77lol2}
\frac{\prod_{k=1}^N |x_i-x_k-\frac{\eta}{3}| \prod_{k=N+1}^{2N} |x_i - \bar x_{k-N} - \frac{\eta}{3}|}{\prod_{k=1,k\neq i}^N |x_i-x_k| \prod_{k=N+1,k\neq N+i}^{2N} |x_i-\bar x_{k-N}|}\\ 
= \frac{\eta}{3} \cdot |Im(x_i)-\frac{\eta}{3}| \frac{\prod_{k=1,k\neq i}^N |x_i-x_k-\frac{\eta}{3}|}{\prod_{k=1,k\neq i}^N |x_i-x_k|}  \cdot \frac{\prod_{k=N+1}^{2N} |x_i - \bar x_{k-N} - \frac{\eta}{3}|}{\prod_{k=N+1,k\neq N+i}^{2N} |x_i-\bar x_{k-N}|} \\
\leq C.
\end{multline}

We hence have

\begin{equation}
	|b_{i,j}| \leq \frac{C}{|Im(x_i)|}.
\end{equation}

Now, this gives that $|||A^{-1}||| \leq C \sum_{i,j} |b_{i,j}| \leq \frac{C}{\min |Im(x_i)|}$. Hence, we obtain that for every $i$, $|s_i| \leq \frac{C}{\min |Im(x_i)|}$.

\end{proof}

We now go on with the proof of lemma \ref{encoreencoreun234}. This time, we will use an algebraic expression for the energy norm, to show another way of dealing with this preserved quantity. 

\begin{proof}

We have from \cite{matsuno2022integrability} that the quantity

\begin{equation}
||m||_{1/2} = \mathcal{J} = 2 \pi \sum_{j,k=1}^N \frac{s_j \cdot \bar s_k}{(x_j-\bar x_k)^2},  
\end{equation}

is preserved. Now, in this expression, the terms 

\begin{equation}
J_k = \frac{s_k\cdot \bar s_k}{(2 Im (x_k))^2} = \frac{|s_k|^2}{4(Im(x_k))^2}
\end{equation}

appear. We will show that they are the leading terms. We have when $j,k\in J$

\begin{equation}
	\left| \frac{s_j \cdot s_k}{(x_j-\bar x_k)^2 } \right| \leq \frac{|s_j| \cdot |s_k|}{\eta^2} \leq \left( \frac{ |s_j|^2}{\eta^4} + \frac{ |s_k|^2}{\eta^4} \right).
\end{equation}

When $j\in J$ and $k \notin J$, we have by lemma \ref{encoreencoreun23}

\begin{equation}
	\left| \frac{s_j \cdot s_k}{(x_j-\bar x_k)^2 } \right| \leq \frac{|s_j| \cdot |s_k|}{\eta^2} \leq |s_j| \cdot \frac{C}{\min(Im(x_i))}.
\end{equation}

When $j\notin J$ and $k\in J$, we have

\begin{equation}
	\left| \frac{s_j \cdot s_k}{(x_j-\bar x_k)^2 } \right| \leq \frac{|s_j| \cdot |s_k|}{\eta^2} \leq |s_k| \cdot \frac{C}{\min(Im(x_i))},
\end{equation}

and when $j \notin J$ and $k \notin J$, we have

\begin{equation}
	\left| \frac{s_j \cdot s_k}{(x_j-\bar x_k)^2 } \right| \leq \frac{|s_j| \cdot |s_k|}{\eta^2} \leq \frac{C^2}{\min(Im(x_i))^2}.
\end{equation}

We now look at the sum. We have

\begin{multline}
	\sum_{j,k} \frac{s_j \cdot \bar s_k}{(x_j-\bar x_k)^2 } = 
	\sum_{j\in J} \sum_{k\in J} \frac{s_j \cdot \bar s_k}{(x_j-\bar x_k)^2 } + \sum_{j\notin J} \sum_{k\in J} \frac{s_j \cdot \bar  s_k}{(x_j-\bar x_k)^2 } \\ + \sum_{j\in J} \sum_{k\notin J} \frac{s_j \cdot \bar s_k}{(x_j-\bar x_k)^2 } + \sum_{j\notin J} \sum_{k\notin J} \frac{s_j \cdot \bar s_k}{(x_j-\bar x_k)^2 }  = A + B + C + D.
\end{multline}

Now, 

\begin{equation}
A+B+C \geq \sum_{j \in J} |s_j| \cdot \left(\frac{C_1\cdot |s_j|}{Im(x_j)^2} - \frac{C_2}{ \min(Im(x_j))} \right).
\end{equation}

We can easily obtain the better upper bound since the coefficient of the matrix $b_{i,j}$ only involves the imaginary part of the poles corresponding to the row $i$, we have

\begin{equation}
|s_i| \leq \frac{C}{|Im(x_i)|}.
\end{equation}

From this, we obtain that for $t$ close enough to $t_0$, we have

\begin{equation}
|A+B+C| \geq C \sum_{j\in J} \frac{|s_j|^2}{Im(x_j)^2}.
\end{equation}

If we have the boundedness of $A+B+C$, we hence have the result, because each $s_j$ for $j\in J$ will have to converge toward $0$ as $t\rightarrow t_0$. We could show the boundedness of $D$ using a better estimate on the $s_j$ for $j\notin J$, but instead, we will show that $D\geq 0$, which also yields the result since $A+B+C+D=const$.

We have that

\begin{equation}
	D = \sum_{j,k\notin J} \frac{s_j \cdot \bar s_k}{(x_j-\bar x_k)^2}.
\end{equation}

If we define $\tilde m$, the function corresponding to the poles that are not in $J$, namely

\begin{equation}
	\tilde m(x,t) = m_0 + \sum_{j\notin J} \frac{s_j}{x-x_j} + \frac{\bar s_j}{x-\bar x_j},
\end{equation}

we do not have the boundedness of $\tilde m$, nor that $m$ is a solution of the PDE, but we still have

\begin{equation}
	D=||\tilde m||_{1/2}^2 \geq 0.
\end{equation}

Now, we can conclude that

\begin{equation}
	s_j \rightarrow 0,~ j\in J.
\end{equation}

This concludes the proof of lemma \ref{encoreencoreun234}.

\end{proof}

We now go on with the proof of lemma \ref{encoreencoreun334}. Again, the idea of the proof is that the spins $s_j$ are bounded when $j\in J$, so we can move them to the bounded left-hand side and remove them from the system of linear equations.

\begin{proof}

	Since $s_j\rightarrow 0$ when $j\in J$, we obtain that $s_j$ is bounded when $t$ is close to $t_0$. Now, we write as in (\ref{icilol2}) 
	
		\begin{equation}\label{icilol223}
				|m(x,t)|= \left| m_0 + \sum_{k=1}^N \left( \frac{s_k(t)}{(x-x_k(t))} - \frac{\bar s_k}{(x-\bar x_k(t))} \right) \right| \leq C,
		\end{equation}
	
		and as in (\ref{unsystemelol}), we obtain the system of linear equations
	
		\begin{equation}\label{unsystemelol23}
			\left\{
			\begin{aligned}
			& p_1 = - \sum_{1\leq k\leq N} \frac{s_{k}}{(z_1-x_{k})} - \sum_{1\leq k\leq N} \frac{\bar s_{k}}{(z_1-\bar x_{k})} \\
			&[\cdots]\\
			& p_{2N} = - \sum_{1\leq k\leq N} \frac{s_{k}}{(z_{2N}-x_{k})} - \sum_{1\leq k\leq  N} \frac{\bar s_{k}}{(z_{2N}-\bar x_{k})}\\
			\end{aligned}
			\right.
		\end{equation}

		where each $p_i$ is bounded, and $z_N$ can be chosen in $\mathbb{R}$. If we want the matrix to be invertible, we want that $z_i \neq z_j$, and $z_i \neq x_j,\bar x_j$. Now, since $x_{j}$ is bounded for $j\in J$, we can put the corresponding terms on the left-hand side and obtain (we remove the $j$ and the $N+j$ lines for $j\in J$)

		\begin{equation}\label{unsystemelol4}
			\left\{
			\begin{aligned}
			& \tilde p_1 = - \sum_{k \neq i_1} \frac{s_{k}}{(z_1-x_{k})} - \sum_{k\neq i_1} \frac{\bar s_{k}}{(z_1-\bar x_{k})} \\
			&[\cdots]\\
			& \tilde p_{2N} = - \sum_{k \neq i_1} \frac{s_{k}}{(z_{2N}-x_{k})} - \sum_{k \neq i_1} \frac{\bar s_{k}}{(z_{2N}-\bar x_{k})}\\
			\end{aligned}
			\right.
		\end{equation}
	
		This is a system of the form $AS=\tilde{P}$, where $\tilde P$ is bounded. The coefficients of the inverse of $A$ are similar to the ones described in (\ref{desbavec1}), but without the term corresponding to $x_{i_1}$, i.e. (expression that makes sense only when $i,j\neq i_1,N+i_1$)

		\begin{equation}
		b_{ij} = \frac{ \prod_{k=1,k\notin J, k-N \notin J}^{2N} (z_j + y_k)(z_k + y_i) } {(z_j + y_i) \left(  \prod_{k=1,k\notin J, k-N \notin J,k\neq j}^{2N}   (z_j - z_k) \right) \left( \prod_{k=1,k\notin J, k-N \notin J,k\neq i}^{2N}  (y_i - y_k) \right) }.
		\end{equation}
	
		Now, we can for instance choose 
	
		\begin{equation}
		\begin{aligned}
		z_j = -x_j - \frac{\eta}{3}, ~ 1 \leq j \leq N,~ j\notin J\\
		z_j = -\bar x_j - \frac{2\cdot \eta}{3}, ~ N+1 \leq j \leq 2N,~ j-N \notin J,\\
		\end{aligned}
		\end{equation}
	
		and obtain by the same argument as in (\ref{74lol}), (\ref{75lol}), (\ref{76lol}) and  (\ref{77lol}) that
	
		\begin{equation}
		|b_{i,j}| \leq C.
		\end{equation}
	
		Hence, since $S = A^{-1} \cdot \tilde P$, we have that $s_k$ is bounded for $k\neq i_1$. This concludes the proof of lemma \ref{encoreencoreun3}.
	
\end{proof}

Now, we easily conclude by obtaining a contradiction with the time-evolution equation of the spin. We have for $j\in J$

\begin{equation}
\dot{|s_j|} \leq C\cdot |s_j|,
\end{equation}

which is not compatible with the fact that $s_j\rightarrow 0$.

\section{Uniform control for N solitons with separated poles}\label{sec:turbu}

\subsection{Separated-poles case}

In this subsection, we will show that in the case where the poles are separated, meaning that there exists $\eta>0$ such that for any $j \neq k$, $t>0$,

\begin{equation}
	|Re(x_k) - Re(x_j)| \geq \eta,
\end{equation}

infinite time blow up can not happen, even up to a subsequence.

\begin{theorem}
	Assume that the poles are separated.We define $J$ the set of indices $j$ such that there exists $t^j_n$,
	\begin{equation}
		\left\{
		\begin{aligned}
			&t_n^j \xrightarrow[n \rightarrow \infty]{} \infty, \\
			&Im(x_j)(t_n) \xrightarrow[n \rightarrow \infty]{} 0.
		\end{aligned}
		\right.
	\end{equation}

	We have already shown the absence of finite time blow up. In addition, we have the following estimate. For any $j \in J$, there exists $C_j>0$ and $D_j>0$ such that 
	
	\begin{equation}
		Im(x_j(t)) \geq C_j e^{-D_j t}.
	\end{equation}

\end{theorem}

We will first have to show the following lemma, that contains intermediate results and properties. We will establish the proofs in the same order. We consider a subsequence $t_n$ such that $Im(x_j)(t_n) \rightarrow 0$ for $j \in J$.

\begin{lemma}\label{unlemmepour}
	There exists $T>0$, $C>0$, such that for $t>T$, The following are satisfied.
	\begin{enumerate}
		\item For any $j$, we have that $|s_j| \leq \frac{C}{\min |Im(x_i)|}$.
		\item For any $j$, we have that $|s_j|(t_n) \xrightarrow[n \rightarrow \infty]{} 0$.
		\item For any $j$, we in fact have $|s_j| \leq C$.
	\end{enumerate}
\end{lemma}

\begin{proof}(Of lemma \ref{unlemmepour})

\begin{enumerate}
	\item We start with the fact that for any $x,t$, $|m(t,x)| = 1$ and write 
	
	\begin{equation}\label{uneequationencore}
		m(t,x) = m_0 + \sum_{k=1}^N \frac{s_k(t)}{ x - x_k(t)} + \sum_{k=1}^N \frac{\bar s_k(t)}{x-\bar x_k(t)}.
	\end{equation}

	We introduce $z_k$, $2N$ real numbers that we will choose later on. We consider the system consisting of $2N$ lines, that is obtained by plugging $z_k$ into the expression of $m(t,x)$.
	
	\begin{equation}
		\left\{
		\begin{aligned}
			&\sum_{k=1}^N \frac{s_k(t)}{z_1-x_k(t)} + \sum_{k=1}^N \frac{\bar s_k(t)}{z_1-\bar x_k(t)} = m(t,z_1) - m_0 = p_1(t), \\
			&\sum_{k=1}^N \frac{s_k(t)}{z_2-x_k(t)} + \sum_{k=1}^N \frac{\bar s_k(t)}{z_2-\bar x_k(t)} = m(t,z_2) - m_0 = p_2(t), \\
			&\dots\\
			&\sum_{k=1}^N \frac{s_k(t)}{z_{2n}-x_k(t)} + \sum_{k=1}^N \frac{\bar s_k(t)}{z_{2n}-\bar x_k(t)} = m(t,z_{2n}) - m_0 = p_{2n}(t),
		\end{aligned}
		\right.
	\end{equation}
	
	where $|p_{k}(t)| \leq 2$, for any $k \in \{1,\dots,2N\}$ and $t>0$. This system can be re-written as 
	
	\begin{equation}
		A \cdot S = P,
	\end{equation}
	
	where $A$ is a Cauchy matrix of the form 
	
	\begin{equation}
	(A)_{k,l} = \frac{1}{z_k - x_l(t)},
	\end{equation}

	for $k \in \{1, \dots , 2N\}$ and $l \in \{1,\dots,N\} $ and 
	
	\begin{equation}
		(A)_{k,l} = \frac{1}{z_k - \bar x_{l-N}(t)},
	\end{equation}

	for $k\in \{1 , \dots , 2N\}$ and $l\in \{N+1,\dots , 2n\}$, and $S$ is the vector
	
	\begin{equation}
		S = (s_1(t),s_2(t),\dots,s_N(t),\bar s_1(t),\bar s_2(t),\dots, \bar s_N(t)).
	\end{equation}

	Since $z_k$, $x_l$ and $\bar x_l$ are all different, the matrix $A$ is invertible and we have 
	
	\begin{equation}\label{expressioninverse}
		(A^{-1})_{i,j} = (B)_{i,j} = \frac{\prod_{k=1}^{2N}  (z_i-y_k)(z_k - y_j)}{(z_i - y_j) \left( \prod_{k\neq i}^{2N} (z_i-z_k) \right) \left( \prod_{k \neq j}^{2N} (y_j-y_k) \right)},
	\end{equation}

	where $y_k = x_k(t)$ for $k \leq N$, $y_k = \bar x_{k-N}(t)$ otherwise. Now, we look at the terms that could make this expression unbounded. We now choose $z_k$ as follows.
	
	\begin{equation}
		z_k = Re(x_k)(t),
	\end{equation}

	for $k \in \{1,\dots,N\}$ and 
	
	\begin{equation}
		z_k = Re(x_{k-N})(t)+ \frac{\eta}{5},
	\end{equation}
	
	for $k \in \{N+1.\dots,2N\}$. Now, by the separation assumption, we have that for $k \neq j$, and $j,k \in \{1,\dots,N\}$, 
	
	\begin{equation}
		|z_j-z_k| \geq \eta.
	\end{equation}
	
	For $j,k \in \{N+1,\dots,2N\}$, we have
	
	\begin{equation}
		|z_j - z_k| \geq \eta.
	\end{equation}
	
	Finally, for $j \in \{1,\dots,N\}$ and $k \in \{N+1,\dots,2N\}$,
	
	\begin{equation}
		|z_j-z_k| \geq \frac{\eta}{2}.
	\end{equation}
	
	We now look at (\ref{expressioninverse}). First, we fix $k \neq i$. We assume that $k,i \in \{1,\dots,N\}$. We have
	
	\begin{equation}
		\frac{|z_i-y_k|}{|z_i - z_k|} = \frac{|Re(x_i) - Re(x_k) - i Im(x_k)|}{ |z_i - z_k| }\leq \frac{|z_i - z_k| + |Im(x_k)|}{|z_i - z_k|} \leq 1 + \frac{2 |Im(x_k)|}{\eta}.
	\end{equation}
	
	Now, since all $|Im(x_i)|$ are all bounded by an absolute constant depending on the initial condition (By $I = \sum_{k} Im(x_i)(0))$, we conclude
	
	\begin{equation}
		\frac{|z_i-y_k|}{|z_i-z_k|} \leq 1 + \frac{C}{\eta}.
	\end{equation}
	
	We now consider  or $k,i \in \{N+1,\dots,2N\}$ and $k\neq i$. We have
	
	\begin{equation}
		\frac{|z_i - y_k|}{|z_i - z_k|} = \frac{|Re(x_i) + \frac{\eta}{5}  - Re(x_k) + i Im(x_k)| }{|Re(x_i) - Re(x_k)|} \leq 1+\frac{2 \eta}{5 \eta} + \frac{2|Im(x_k)|}{\eta}.
	\end{equation}
	
	We conclude that
	
	\begin{equation}
		\frac{|z_i - y_k|}{|z_i - z_k|} \leq 2 + \frac{C}{\eta}.
	\end{equation}

	Finally, we consider $i \in \{1,\dots,N\}$ and $k \in \{N+1, \dots,2N\}$ (the last case is identical). We have
	
	\begin{equation}
		\frac{|z_i - y_k|}{|z_i - z_k|} = \frac{| Re(x_i) - Re(x_k) + i Im(x_k) |}{|z_i - z_k|} \leq 2+\frac{C}{\eta}.
	\end{equation}
	
	The other term is identical. Hence, by regrouping the nominator and the denominator when $k \neq i,j$, we obtain
	
	\begin{equation}
		|(A^{-1})_{i,j}| = \frac{(z_i-y_i) (z_j-y_j)}{(z_i-y_j)} \cdot \left(2 + \frac{C}{\eta}\right)^{2N-2}.
	\end{equation}
	
	Now, we have by definition if $i,j \{1,\dots,N\}$ that
	
	\begin{equation}
		 \frac{(z_i-y_i) (z_j-y_j)}{(z_i-y_j)} = \frac{ Im (x_i) Im(x_j) }{|Re(x_i) - Re(x_j) - i Im(x_j)|},
	\end{equation}

	which is smaller than $C'/\varepsilon\leq C$. If $i,j \in \{N+1,\dots,2N\}$, we have
	
	\begin{equation}
		\left| \frac{(z_i-y_i) (z_j-y_j)}{(z_i-y_j)} \right| \leq C \frac{ (\eta + |Im (x_i)|) (\eta + |Im(x_j)|) }{|Re(x_i) - Re(x_j) - i Im(x_j)| + \eta} \leq C.
	\end{equation}
	
	If $i \in \{1,\dots, N\}$ and $j \in \{N+1,\dots,2N\}$, we have

	\begin{equation}
		\left| \frac{(z_i-y_i) (z_j-y_j)}{(z_i-y_j)} \right| \leq C \frac{ (|Im (x_i)|) (\eta + |Im(x_{j-N})|) }{|Re(x_i) - Re(x_{j-N}) - i Im(x_j)|} \leq C,
	\end{equation}

	except in the case where $j = i+N$, in this case, the real part of the denominator simplifies and one obtain
	
	\begin{equation}
	\left| \frac{(z_i-y_i) (z_j-y_j)}{(z_i-y_j)} \right| \leq C \frac{ (|Im (x_i)|) (\eta + |Im(x_{j-N})|) }{|Re(x_i) - Re(x_{j-N}) - i Im(x_j)|} \leq \frac{C}{|Im(x_{N-j})|}.
\end{equation}
	
	Now, the final case is $i \in \{N+1,\dots,2N\}$ and $j\in \{1,\dots,N\}$. We have in that case 
	
	\begin{equation}
	\left| \frac{(z_i-y_i) (z_j-y_j)}{(z_i-y_j)} \right| \leq C \frac{ (|Im (x_{i-N})| + \eta) (|Im(x_{j})|) }{|Re(x_i) - Re(x_{j-N}) - i Im(x_j)|+ \eta} \leq C.
\end{equation}
	
	Overall, all the coefficients of $B$ are bounded, except the ones of the form $(B)_{i,i+N}$ (for $i \in \{1,\dots,N\}$) in which case we only have 
	
	\begin{equation}
		|(B)_{i,N+i}| \leq \frac{C}{|Im(x_i)|}.
	\end{equation}
	
	Overall, since the operator norm of $B$ is (up to a constant depending only on the dimension) smaller than the sum of the coordinates of each row, we have
	
	\begin{equation}
		|||B||| \leq \frac{C}{|Im(x_{i_0})|},
	\end{equation}
	
	where $i_0$ is such that $|Im(x_{i_0}) = \min(|Im(x_i)|)$. Now, since 
	
	\begin{equation}
		S = B \cdot P,~|P| \leq C,
	\end{equation}

	we have that for any $j$,
	
	\begin{equation}
		|s_j| \leq \frac{C}{|Im(x_{i_0})|}.
	\end{equation}
	
	This concludes the proof of the first part of the lemma.
	
	\item We now want to show that for any $j$ such that there exists $t_n$, 
	
	\begin{equation}
		\left\{
		\begin{aligned}
			&t_n \rightarrow \infty,\\
			&Im(x_j)(t_n) \rightarrow 0,
		\end{aligned}
		\right.
	\end{equation}
	
	we have 
	
	\begin{equation}
		s_j(t_n) \rightarrow 0.
	\end{equation}
	
	We will admit this result, as the proof is identical to the one done in the previous section. The idea is the following. The energy $H^{1/2}$ is a preserved quantity, we express it either with the analytic or algebraic expression as

	\begin{equation}
		||m||_{1/2} = 2 \pi \sum_{j,k=1}^N \frac{s_j \cdot \bar s_k}{(x_j-x_k)^2},
	\end{equation}
	
	or
	
	\begin{multline}
		||m||_{1/2}^2 \\= C \int_{x=-\infty}^{+\infty} \left| \int_{y=-\infty}^{+\infty} \frac{\partial}{\partial x} \left( m_0 + \sum_{j=1}^N \left( i \frac{s_j}{x-x_j} - i \frac{\bar s_j}{x-x_j} \right) \frac{1}{\sqrt{|x-y|}} dy \right)  \right|^2 dx.
	\end{multline}
	
	Then, using the facts that $|x_j-x_k| \geq \eta$ for $j\neq k$ and $|s_j| \leq C |Im(x_{i_0})|^{-1}$, we show that the terms can not simplify each other, and hence each diagonal term must be bounded. We skip the details because we have already done both situations in details previously. 
	
	This leads to 
	
	\begin{equation}
		\frac{|s_j|^2}{|Im(x_j)|^2} \leq C.
	\end{equation}
	
	In particular, for each $j \in J$, we obtain $s_j(t_n) \rightarrow 0$. 
	
	\item We now want to obtain the following result. For any $t>0$, for any $j \in \{ 1,\dots,N \}$,
	
	\begin{equation}
		|s_j(t)| \leq C.
	\end{equation}
	
	In order to do so, we will apply a second time the Cauchy Matrix argument, but using now the intermediate results that we have obtained. In particular, every $s_j$ for $j\in J$ will be put on the "constant" side. Let us now write the precise argument.
	
	We define $z_k(t) = Re(x_k)(t)$ for $k \in \{1,\dots,N\}$, and $z_k(t) = Re(x_{k-N})(t)$ for $k \in \{N+1,\dots,2N\}$. Plugging $z_i \in \mathbb{R}$ into the expression of $m$, we obtain
	
	\begin{equation}
		m(z_i,t) - m_0 = \sum_{j=1}^N \frac{s_j}{z_i - x_j} + \sum_{j=1}^N \frac{\bar s_j}{z_i - \bar x_j}.
	\end{equation}
	
	These $2 N$ equations can be written as 
	
	\begin{equation}
		A \cdot S = P,
	\end{equation}
	
	where $S = (s_1,\dots,s_N,\bar s_1, \dots, \bar s_N)$, $P = (m(z_1,t),\dots,m(z_{2N},t))$ (and is bounded), and $A$ is the Cauchy matrix defined as 
	
	\begin{equation}
		(A)_{i,j} = \frac{1}{z_i - x_j},
	\end{equation}
	
	for $j \in \{1,\dots,N\}$ and 
	
	\begin{equation}
		(A)_{i,j} = \frac{1}{z_i - \bar x_j},
	\end{equation}
	
	for $j \in \{N+1,\dots,2N\}$.
	
	This is what we have done previously. Another way to write these equalities is to write
	
	\begin{equation}
	m(z_i,t) - m_0 - \sum_{j \in J}  \frac{s_j}{z_i - x_j} + \sum_{j\in J} \frac{\bar s_j}{z_i - \bar x_j} = \sum_{j\notin J} \frac{s_j}{z_i - x_j} + \sum_{j\notin J} \frac{\bar s_j}{z_i - \bar x_j}.
\end{equation}

	We will use the second result of this proof, which is that for $j \in J$,
	
	\begin{equation}
		\frac{|s_j|}{|Im(x_j)|} \leq C.
	\end{equation}
	
	Hence, the left-hand side is bounded, since for any $i,j$, $|z_i - x_j| \geq |Im(x_j)|$, since $z_i \in \mathbb{R}$. We now have a system of $2N$ equations, where we put $s_k$, $k \notin J$, on the "constant" side. We see it as a system of equations with $2N - 2 |J|$ variables, since we already know that $|s_j|$ is bounded for $j \in J$. We drop all the lines corresponding to $i \in J$, or $i-N \in J$, and obtain a system of the form
	
	\begin{equation}
		\tilde A \cdot \tilde S = \tilde P,
	\end{equation}
	
	where $\tilde S = (s_{j_1},\dots , s_{j_{|J|}},\bar s_{j_1},\dots, \bar s_{j_{|J|}})$ (with $J= \{j_1,\dots,j_{|J|}\}$), $\tilde P$ is bounded and $\tilde A$ is the Cauchy matrix defined as (for $j,j-N \notin J$, $i,i-N \notin J$),
	
	\begin{equation}
		(\tilde A)_{i,j} = \frac{1}{z_i - x_j},
	\end{equation}

	for $j \in \{1,\dots,N\}$ and $j \notin J$, and 
	
	\begin{equation}
		(\tilde A)_{i,j} = \frac{1}{z_i - \bar x_j},
	\end{equation}

	for $j \in \{N+1,\dots,2N\}$ and $j-N \notin J$. 

	Now, $|Im(x_j)| \geq \varepsilon >0$ for $j \notin J$. Hence, the coefficients of the inverse of $\tilde A$ satisfy 
	
	\begin{equation}
		\left| (\tilde A)^{-1}_{i,j} \right| \leq C.
	\end{equation}

	Since the operator norm of the inverse of $\tilde A$ is bounded by the sum of all the coefficients of each line, we conclude that $\tilde S$ is bounded since $\tilde P$ is bounded. Overall, we obtain that 
	
	\begin{equation}
		\left\{
		\begin{aligned}
			& |s_j| \leq C,~ j\notin J,\\
			&s_j \rightarrow 0,~ j \in J.
		\end{aligned}
		\right.
	\end{equation}
	
\end{enumerate}
	
\end{proof}

We now go on with the proof of the main result of this section. 

\begin{proof}

We recall the time evolution equation of the spins, 

\begin{equation}\label{encoreunemdr}
	\dot s_j = -2 \sum_{k \neq j} \frac{s_j \times s_k}{(x_j-x_k)^2}. 
\end{equation}

	Since we have $|x_j - x_k| \geq \eta$, (\ref{encoreunemdr}) leads to
	
	\begin{equation}
		\left| \frac{\partial }{\partial t} \left| s_j(t) \right| \right| \leq C_j |s_j|,
	\end{equation} 

	and hence 
	
	\begin{equation}
		\ln \left( \frac{|s_j|(t)}{|s_j(t_0)|} \right) \geq -(t-t_0) C_j,
	\end{equation}

	which leads to
	
	\begin{equation}
		|s_j(t)| \geq A_j e^{- t D_j}.
	\end{equation}

	Since we have
	
	\begin{equation}
		\frac{|s_j|}{Im(x_j)} \leq C,
	\end{equation}

	we obtain the result.

\end{proof}

\subsection{Well posedness at $t=+\infty$, general case }

In this subsection, we prove the following theorem, which contains both the well-posedness for large times and for infinite times.

\begin{theorem}
    Assume that the speeds are different, i.e. $v_i\neq v_j$ for $i \neq j$, and $m$ be a solution of \eqref{HWM} of the form \eqref{expressionSol}. Then, there exists $B_1$ and $B_2$ such that for
    \begin{equation}
        \left\{
        \begin{aligned}
            &|s_j(t)| \leq B_1,~ j\in \{1,\dots,N\},\\
            &|Im(x_j(t))| \geq B_2,~ j \in \{1, \dots, N \}.
        \end{aligned}
        \right.
    \end{equation}
\end{theorem}

We will skip some details of the proof, as it is similar to what has been done for the case $|Re(x_j)-Re(x_k)| \geq \eta$, and focus on what is different. In particular, we will admit the following lemma. It is obtained using the conservation of the $H^{1/2}$ energy norm and of the norm of $m$. One can also apply the boundedness of the spin that we have shown when the poles are separated to this case since different propagation speeds will lead to separated poles for large enough times.

\begin{lemma}\label{lemme3}
    The poles are bounded, i.e there exist $B_1$ such that 
    \begin{equation}
        |s_j(t)| \leq B_1.
    \end{equation}
\end{lemma}

\begin{proof}
    The difference comes from the fact that the different speed gives a factor $(v_j - v_k) t$ in the dynamical system satisfied by the spins and poles. Indeed, \eqref{CM} now gives 

    \begin{equation}
        | \dot s_j(t) | \leq \frac{ 8 (N-1) B_1 |s_j(t)|}{t^2},
    \end{equation}

    which yields

    \begin{equation}
        \left| \frac{\ln(|s_j(t))|}{\ln(|s_j(t_0)|)} \right| \leq \frac{8 (N-1) B_1}{t_0},
    \end{equation}

    so 

    \begin{equation}
        |s_j(t)| \geq |s_j(t_0)| e^{- \frac{8 (N-1) B_1}{t_0}}.
    \end{equation}

    Using now the fact that, for the same reasons as previously,

    \begin{equation}
        \frac{|s_j(t)|}{|Im(x_j(t))|} \leq C,
    \end{equation}

    we obtain 

    \begin{equation}
        |Im(x_j(t))| \geq \frac{|s_j(t_0)|}{C}e^{- \frac{8 (N-1) B_1}{t_0}}.
    \end{equation}
    
\end{proof}

\printbibliography

\end{document}